\DeclareSymbolFont{cyrletters}{OT2}{wncyr}{m}{n}
\DeclareFontFamily{OT1}{rsfs}{}
\newcommand{\Z}{\mathbf{Z}}
\newcommand{\Ext}{\mathrm{Ext}}
\newcommand{\Eis}{\mathrm{Eis}}
\newcommand{\Ad}{\mathrm{Ad}}
 \newcommand{\Hom}{\mathrm{Hom}}
\newcommand{\mcA}{\mathcal{A}}
\newcommand{\SL}{\mathrm{SL}}
\renewcommand{\dim}{\mathrm{dim}}
\newcommand{\Abig}{\mathcal{A}}
\newcommand{\Asmall}{\mathcal{A}_{\lambda-\nil}}
\newcommand{\C}{\mathbf{C}}
\newcommand{\gK}{(\mathfrak{g}, K)}
\newcommand{\R}{\mathbf{R}}
 \DeclareFontShape{OT1}{rsfs}{n}{it}{<-> rsfs10}{}
\DeclareMathAlphabet{\mathscr}{OT1}{rsfs}{n}{it}
\newtheorem{theorem}{Theorem}
\newtheorem{prop}[theorem]{Proposition}
\newtheorem{lemma}[theorem]{Lemma}
\newtheorem{remark}[theorem]{Remark}
\newtheorem{definition}[theorem]{Definition}
\numberwithin{theorem}{section}
\newcommand{\ip}[2]{\langle #1, #2 \rangle}
\newcommand{\gk}{(\mathfrak{g},K)}
\newcommand{\mA}{\mathcal{A}}
\newcommand{\mS}{\mathcal{S}}
\newcommand{\mT}{\mathcal{T}}
\newcommand{\mH}{\mathcal{H}}
\newcommand{\mE}{\mathcal{E}}
\newcommand{\fg}{\mathfrak{g}}
\newcommand{\fs}{\mathfrak{s}}
\newcommand{\Cusp}{\mathrm{Cusp}}
\newcommand{\dom}{\backslash}
\newcommand{\BC}{\mathbf{C}}
\newcommand{\BR}{\mathbf{R}}
\newcommand{\BZ}{\mathbf{Z}}
\newcommand{\BH}{\mathbf{H}}
\newcommand{\Cas}{\mathcal{C}}
\newcommand{\meta}{G}
\newcommand{\del}{\partial}
\newcommand{\nil}{\textrm{nil}}
\begin{document}  
	\title{Cohomology of Fuchsian groups and Fourier interpolation}
	\author{Mathilde Gerbelli-Gauthier and Akshay Venkatesh}
	\begin{abstract} 
	We  give a new proof of a Fourier interpolation result first proved by Radchenko-Viazovska, deriving it from a vanishing result
	of the first cohomology of a Fuchsian group with coefficients in the Weil representation.

	\end{abstract}

	\maketitle

	\setcounter{tocdepth}{1}
	\tableofcontents

	\section{Introduction}\label{introduction}

Let~$\mS$ be the space of even Schwartz functions on the real line, and~$\fs$ the space of sequences of  complex numbers $(a_n)_{n \geq 0}$ such that $|a_n| n^k$ is bounded for all $k$; we write $\hat{\phi}(k) = \int_{\BR} \phi(x) e^{-2 \pi i k x} dx$ for the Fourier transform of $\phi \in \mS$. 
 In \cite{RV19}, Radchenko-Viazovska proved the following
beautiful ``interpolation formula'': 

\begin{theorem} \label{interpolation}
	The map   \[\Psi: \mS\to \fs \oplus \fs, \quad \phi \mapsto (\phi(\sqrt n), \hat{\phi}(\sqrt n))_{n \geq 0} \] 	is an isomorphism onto the codimension 1 subspace of~$\fs \oplus \fs$ cut out by the Poisson summation formula, i.e.  the subspace of $(x_n, y_n)$ defined by
	$\sum_{n \in \mathbf{Z}} x_{n^2}= \sum_{n \in \mathbf{Z}} y_{n^2}$.  
\end{theorem}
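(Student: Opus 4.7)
The plan is to reinterpret $\Psi$ cohomologically and deduce Theorem~\ref{interpolation} from the structure of $H^1$ of a Fuchsian group $\Gamma$ acting on $\mS$ via the Weil representation. Let $T$ act on even Schwartz functions by $\phi(x) \mapsto e^{\pi i x^2}\phi(x)$ and $S$ by a suitably normalized Fourier transform; these generate an action of the theta group $\Gamma = \Gamma_\theta \subset \SL_2(\Z)$ (up to metaplectic issues, which I ignore for this sketch). The group $\Gamma$ has two cusps, stabilized by $T^2$ and by $ST^2S^{-1}$ respectively, and these correspond to the two copies of $\fs$ in the target of $\Psi$: an even tempered distribution is fixed by the dual action of $T^2$ exactly when it is supported on $\sqrt{\Z}$, hence is naturally encoded by a sequence in $\fs$. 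Under this identification, the two components of $\Psi$ become the two ``constant-term maps'' along the cusps of $\Gamma$.

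Next, I would embed $\Psi$ into a short exact sequence of $\Gamma$-modules
$$0 \to \mS \to \mathcal{E} \to \mathcal{F} \to 0,$$
arranged so that $H^0(\Gamma, \mathcal{E})$ receives $\Psi$ and $H^0(\Gamma, \mathcal{F}) \cong \fs \oplus \fs$ canonically. The associated long exact sequence
$$0 \to H^0(\Gamma, \mS) \to H^0(\Gamma, \mathcal{E}) \to H^0(\Gamma, \mathcal{F}) \xrightarrow{\delta} H^1(\Gamma, \mS)$$
then reduces the theorem to two facts. First, $H^0(\Gamma, \mS) = 0$, which is immediate since a $T^2$-fixed Schwartz function would have to be supported on $\sqrt{\Z}$; this gives injectivity of $\Psi$. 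Second, the image of the connecting map $\delta$ is exactly one-dimensional, and the single linear functional on $\fs \oplus \fs$ cutting it out is the Poisson relation $\sum_{n \in \Z} x_{n^2} = \sum_{n \in \Z} y_{n^2}$; this identifies the image of $\Psi$ with the codimension-one subspace in the theorem.

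To analyze $H^1$ and the connecting map, I would exploit the presentation $\Gamma_\theta/\{\pm I\} \cong \Z/2 \ast \Z$ generated by $S$ and $T^2$. A 1-cocycle is then encoded by a pair $(c(S), c(T^2)) \in \mS \oplus \mS$ with no further relations, and triviality means producing a single $v \in \mS$ with $c(S) = (1-S)v$ and $c(T^2) = (1-T^2)v$. The $T^2$-equation determines $v$ formally via a Fourier-type expansion at the cusp, producing a candidate as a sum of contributions associated to the points of $\sqrt{\Z}$; the $S$-equation then forces a self-duality that, modulo one explicit Poisson-type relation, pins down $v$ uniquely and supplies the promised description of $\mathrm{image}(\delta)$.

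The main obstacle I anticipate lies within the computation of $H^1$: constructing a formal or distributional $v$ satisfying both coboundary relations is essentially formal, but proving that $v$ actually lies in $\mS$, i.e.\ has rapid decay at infinity, is delicate. This step will require uniform estimates on matrix coefficients of the Weil representation along the two parabolic directions, or equivalently on a Poincar\'e-type series of Weil-representation vectors representing a given cocycle. Conceptually, this is where the modular-forms estimates of \cite{RV19} will re-enter through the cohomological framework, and I expect it to be the technical heart of the proof.
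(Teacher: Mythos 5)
Your overall strategy---reduce the interpolation formula to the cohomology of a Fuchsian group acting through the Weil representation, with the two families of evaluation functionals attached to the two parabolic generators and Poisson summation accounting for a one-dimensional obstruction---is the right one and is what the paper does. But your setup has the duality backwards, and this is not cosmetic. You take coefficients in $\mS$ itself; the paper takes coefficients in the space $\mS^*$ of tempered distributions (the distributional globalization), realizes the \emph{dual} map $\Psi^*:\fs^*\oplus\fs^*\to\mS^*$, $(a_n,b_n)\mapsto\sum a_n\delta_n+\sum b_n\hat\delta_n$, as the middle arrow of the Mayer--Vietoris sequence for the free group $\Gamma=\langle e,f\rangle$, and then recovers the statement about $\Psi$ via the closed-range theorem for Fr\'echet spaces. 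The distinction matters: $H^1(\Gamma,\mS)$ with smooth coefficients is \emph{infinite-dimensional} (the same Mayer--Vietoris sequence shows it surjects onto $\mS/(1-e)\mS\oplus\mS/(1-f)\mS\cong\fs\oplus\fs$), so no analysis of it can terminate in ``one explicit Poisson-type relation''; the finiteness and vanishing phenomena live only in the distributional globalization. Relatedly, injectivity of $\Psi$ is not ``immediate'': $H^0(\Gamma,\mS)=0$ only says that no nonzero Schwartz function is $\Gamma$-invariant, whereas injectivity of $\Psi$ is Fourier uniqueness, is dual to \emph{surjectivity} of $\Psi^*$, and is exactly the content of $H^1(\Gamma,\mS^*)=0$---the hard half of the theorem. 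Your unspecified sequence $0\to\mS\to\mathcal{E}\to\mathcal{F}\to 0$ would have to smuggle precisely this difficulty into its exactness, and I do not believe it can be built by soft means.

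The more serious gap is that you give no method for the cohomology computation itself. Solving the coboundary equations formally at the cusp and then proving the resulting $v$ is Schwartz ``using the modular-forms estimates of Radchenko--Viazovska'' is a description of re-deriving their explicit construction in cohomological clothing, not an independent proof. The paper's substitute is its Theorem on $H^1(\Gamma, W^*_{-\infty})$: for any lattice $\Gamma$ and any irreducible infinite-dimensional $\gK$-module $W$, this $H^1$ has dimension equal to the multiplicity of the complementary module $W^{cl}$ in cusp forms. That theorem is proved by representation theory (resolutions by principal series following Bunke--Olbrich, or an identification with $\Ext^1_{\gK}(W,\mA)$, both resting on surjectivity of $\Cas-\lambda$ on automorphic forms). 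Applied to the even Weil representation (lowest weight $1/2$, so $W^{cl}$ has highest weight $-3/2$), the vanishing of $H^1(\Gamma,\mS^*)$ reduces to the non-existence of holomorphic cusp forms of weight $3/2$ for $\Gamma_1(4)$ (the curve $X_1(4)$ has genus zero), and $\dim H^0(\Gamma,\mS^*)=1$ to the fact that weight-$1/2$ forms are spanned by the theta series. None of this input appears in your proposal. Two smaller points: the metaplectic issue cannot be ignored if you use $S$, whose lift has order $8$; the paper sidesteps it by taking the free group on the two unipotents $e=T^2$ and $f$, which lifts canonically to the double cover and makes the Mayer--Vietoris sequence literal. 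Also, $T^2$ and $ST^2S^{-1}$ stabilize $\Gamma_\theta$-equivalent cusps, so the ``two cusps give the two copies of $\fs$'' heuristic is off; what drives the argument is the free (product) decomposition of the group, not its cusps.
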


	This is an abstract interpolation result: The statement implies the existence of a universal formula
that computes any value~$\phi(x)$ of any  even Schwartz function~$\phi$
as a linear combination $\sum a_n(x) \phi(\sqrt{n}) + \sum \hat{a}_n(x) \hat{\phi}(\sqrt{n})$
for some $a_n(x), \hat{a}_n(x)$, but does not specify what those functions are.  By contrast, 
Radchenko and Viazovska  first write down this explicit interpolation formula, and
then deduce 
 Theorem \ref{interpolation} from it. In a sense, what is accomplished
in the present paper is to separate the abstract content of this interpolation result
from its computational  aspect.
  
The morphism $\Psi$ is in fact a homeomorphism of topological vector spaces with reference to natural topologies. 
We will give another  proof of this theorem. The first step
of this proof is to notice that the evaluation points $\sqrt{n}$
occur very naturally in the theory of the {\em oscillator representation} defined by Segal--Shale--Weil (see \cite{Chan12} or \cite{LV13} for introductions).
Using this observation,  the theorem can be reduced to computing
the cohomology of a  certain Fuchsian group with coefficients in
this oscillator representation, and here we prove a more general statement:

\begin{theorem} \label{mainthm}
 Let $G$ be $\SL_2(\R)$ or a finite cover thereof, $\Gamma$ a lattice in $G$, 
   $W$ an irreducible infinite-dimensional $(\mathfrak{g}, K)$-module, 
 and  $W^*_{-\infty}$  the distributional globalization of its dual (see \ref{ss globalization}). 
 Then
 $H^1(\Gamma, W^*_{-\infty})$ is always finite dimensional, and in fact 
\begin{equation} \label{mainthm FrobRep} \dim H^1(\Gamma, W^*_{-\infty}) =  \mbox{multiplicity of $W^{cl}$ in  cusp forms on }\Gamma\backslash G ,\end{equation}
 where  $W^{cl}$ is the complementary irreducible representation to $W$ defined in  \S \ref{gKmodules}. 
\end{theorem}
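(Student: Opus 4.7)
The plan is to compute $H^1(\Gamma, W^*_{-\infty})$ by the Matsushima--Borel method: reduce to relative Lie algebra cohomology with automorphic coefficients, then decompose spectrally. Since $W^*_{-\infty}$ is a smooth continuous $G$-module, one expects an identification
\[
H^*(\Gamma, W^*_{-\infty}) \;\cong\; H^*\bigl(\fg, K;\, \mathcal{A}(\Gamma\backslash G) \,\widehat{\otimes}\, W^*_{-\infty}\bigr),
\]
where $\mathcal{A}(\Gamma\backslash G)$ is an appropriate space of smooth automorphic functions of moderate growth. The distributional globalization $W^*_{-\infty}$ is the natural target for cuspidal matrix coefficients: a cusp form generating a copy of $W^{cl}$, paired against $K$-finite vectors of $W$, yields a distribution on $W$, i.e.\ a vector in $W^*_{-\infty}$. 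Constructing the map from cusp forms of type $W^{cl}$ into $H^1(\Gamma, W^*_{-\infty})$ should then proceed \emph{à la} Eichler integral: for $\gamma \in \Gamma$, one integrates a $W^*_{-\infty}$-valued $1$-form built from the cusp form along a path $x_0 \to \gamma x_0$ in the upper half-plane, with convergence taking place in the distributional topology.

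The next step is to identify the image of this map and show it exhausts $H^1$. I would spectrally decompose $\mathcal{A}(\Gamma\backslash G)$ into cuspidal, residual, and Eisenstein parts. The cuspidal contribution to $H^1$ is
\[
\bigoplus_\pi m(\pi)\, H^1\bigl(\fg, K;\, \pi^\infty \otimes W^*_{-\infty}\bigr),
\]
and for $G=\SL_2(\R)$ or a finite cover, $\fg/\LieK$ is two-dimensional with $K$ acting by weights $\pm 2$, so this $(\fg, K)$-cohomology is controlled by a small explicit complex at the level of $K$-types. The module $W^{cl}$ is defined in \S \ref{gKmodules} precisely so that $H^1(\fg, K;\, (W^{cl})^\infty \otimes W^*_{-\infty})$ is one-dimensional and vanishes for all other irreducibles---this is the central algebraic computation, and it should reduce to matching a single pair of $K$-types at the boundary of the admissible spectrum of each side.

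The main obstacle is ruling out contributions from the non-cuspidal spectrum when $\Gamma$ is non-cocompact. The residual spectrum reduces to characters of $\pi_0(G)$, which have no degree-$1$ cohomology against an infinite-dimensional irreducible by elementary weight considerations. For the Eisenstein continuous spectrum, candidate classes arise from principal series matrix coefficients, and one must show these become exact when paired with $W^*_{-\infty}$; I expect the mechanism is that the distributional completion admits explicit primitives built from analytic continuation in the principal series parameter, whereas the smooth globalization would not. This is where the precise choice of globalization becomes indispensable, and I anticipate it will be the technical heart of the proof; once it is established, finite-dimensionality of $H^1$ follows at once from the cuspidal identification.
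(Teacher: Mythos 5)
Your overall architecture (reduce to $(\fg,K)$-cohomology against automorphic forms, then isolate the cuspidal contribution by an explicit $K$-type computation that singles out $W^{cl}$) is the right family of ideas and matches the second proof in the paper, but two of your steps have genuine gaps. First, the identification $H^*(\Gamma, W^*_{-\infty})\cong H^*(\fg,K;\mA\,\widehat{\otimes}\,W^*_{-\infty})$ is not something one can just ``expect'' here: the left-hand side is cohomology of the \emph{abstract} $\Gamma$-module $W^*_{-\infty}$, with no topology, while any Shapiro/van Est argument produces extensions of topological $G$-modules. Bridging the two requires an automatic-continuity input --- concretely, that any cocycle $j:\Gamma\to W^*_{-\infty}$ has moderate growth, which one extracts from the cocycle relation and a generating set of $\Gamma$. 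Without this, the map from $H^1(\Gamma,W^*_{-\infty})$ to the topological $\Ext$-group is not even defined. Moreover, for infinite-dimensional $W$ the completed tensor product formulation must be replaced by $\Ext^1_{\gK}(W,\mA_K)$ (equivalently $\Ext^1_G(W_\infty,\mA)$), and showing the algebraic and topological $\Ext$-groups agree uses the Casselman--Wallach theory; this is not cosmetic.

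Second, your treatment of the non-cuspidal spectrum fails at a specific point. The space of moderate-growth automorphic forms does not decompose as cuspidal $\oplus$ residual $\oplus$ (direct integral of) Eisenstein --- that decomposition lives in $L^2$, not in $\mA$ --- so the proposed spectral bookkeeping cannot be carried out termwise. Worse, the claim that the residual part is killed ``by elementary weight considerations'' is false: if $W$ is a discrete series of weight $\pm 2$, then $W^{cl}$ is the \emph{trivial} representation and $\Ext^1_{\gK}(W,\C)$ is one-dimensional, so the constants (which do occur in $\mA$) would a priori contribute to $H^1$; yet the theorem asserts $H^1=0$ in that case, since the trivial representation is not cuspidal. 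The mechanism that actually kills both the residual and the Eisenstein contributions is analytic, not representation-theoretic: the operator $\Cas-\lambda$ maps $\mA_K$ \emph{onto} the orthogonal complement of $\Cusp(\lambda)$, and surjectivity of $\Cas-\lambda$ on a $\gK$-module forces the vanishing of $\Ext^1_{\gK}(W,-)$ against it. Your instinct that primitives come from varying the principal series parameter is the germ of the proof of this surjectivity (one divides by the spectral eigenvalue $\lambda_t$ in the Eisenstein integral, using orthogonality to $\Eis(\lambda)$ to cancel the zero), but it must be packaged as a surjectivity statement for the Casimir on moderate-growth functions on $\Gamma\backslash G$, with separate treatment of the cusps by solving an ODE; as written, your argument does not reach this.
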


The theorem can be contrasted with usual Frobenius reciprocity:
\begin{equation} \label{eq FrobRep} \dim H^0(\Gamma, W^*_{-\infty}) = \mbox{multiplicity of $W$ in  automorphic forms on } \Gamma \dom G, \end{equation}
Note that, in the passage from \eqref{mainthm FrobRep} to \eqref{eq FrobRep}, 
``cusp forms'' have been replaced by ``automorphic forms''
and $W^{cl}$ by $W$. 
We also emphasize the   surprising fact 
that, in the theorem, the $H^1$ takes {\em no account} of the topology on~$W^*_{-\infty}$:
it is simply the  usual cohomology of the discrete group~$\Gamma$ acting on the abstract vector space~$W^*_{-\infty}$. 
The corresponding determination for finite-dimensional $W$ is
the subject of automorphic cohomology and is in particular completely understood, going back to~\cite{Eichler}. 

A variant of Theorem \ref{mainthm},  computing {\em all} the cohomology groups $H^i$ when $W$ is a spherical principal series representation, was already proved by Bunke and Olbrich in the 1990s. 
We were unaware of this work when we first proved Theorem \ref{mainthm}; our original
argument has many points in common with \cite{BO},  most importantly
in our usage of surjectivity of the Laplacian both for analytic and algebraic purposes, 
 but also has some substantial differences of setup and emphasis. 
 We will correspondingly give two proofs: the first based on  the results of \cite{BO}, and the second
a shortened version of our original argument.  

 Some other interpolation consequences
 of Theorem \ref{mainthm}, 
where interpolation is understood
 in the abstract sense as discussed after Theorem \ref{interpolation}, 
  arise by replacing~$\mS$ by other spaces of functions carrying natural representations of $SL_2(\BR)$ and its finite covers:  we discuss this in \S \ref{triangle groups}.
	For example, 
Hedenmalm and Montes-Rodríguez \cite{HM}
have shown that the functions $e^{i \pi \alpha n t}$, $e^{i \pi \beta n/t}$
are weakly dense in $L^{\infty}$ if and only if $\alpha\beta=1$.
We will show that an interpolation result holds at the transition point $\alpha \beta =1$;
we thank the referee for bringing  \cite{HM} to our attention.

\subsection{Theorem \ref{mainthm} implies Theorem \ref{interpolation}}   \label{outline}

We give an outline of the argument and refer to \S \ref{InterpCoh} for details. 
 
We pass first to a dual situation.  Denote by $\mS^*$ the space of tempered distributions, i.e. the continuous dual of $\mS$. 
 For our purposes we regard it as a vector space without
topology.

Similarly, we define $\mathfrak{s}^*$ as the continuous dual of $\mathfrak{s}$, 
where $\mathfrak{s}$ is topologized by means of the norms $\|(b_n)\|_k := \sup_{n} b_n (1+|n|)^k$; 
thus, $\mathfrak{s}^*$  may be identified with sequences $(a_n)$ of complex numbers of polynomial growth,
where the pairing of $(a_n) \in \mathfrak{s}^*$ and $(b_n) \in \mathfrak{s}$ is given by the rule $\sum a_n b _n$.  With this notation, the map  \[\Psi^*: \fs^* \oplus \fs^* \to \mS^*,\] 
dual to $\Psi$
  takes the coordinate functions to the distributions $\delta_{n}$, and $ \hat{\delta}_{n}$,    
  \[ (a_n,b_n)_{n \geq 0}  \mapsto \sum a_n \delta_{n} + b_n \hat{\delta}_{n},  \] where 
  \[ \delta_{n}(\phi) = \phi(\sqrt{n}), \quad \hat{\delta}_{n}(\phi) = \hat{\phi}(\sqrt{n}). \] 

Then Theorem \ref{interpolation} is equivalent to the assertion:
\begin{quote}
(Dual interpolation theorem): 
  $\Psi^*$ is surjective
and its kernel consists precisely of the ``Poisson summation'' relation.
\end{quote}
The equivalence of this statement and Theorem \ref{interpolation} is not a complete formality because of issues of topology: see
\eqref{Frechetdual} for an argument that uses a theorem of Banach. 

 The next key observation is that
 the space of distributions spanned by~$\delta_{n}$ and by~$\hat{\delta}_{n}$
   occur in a natural way in representation theory.  
\begin{quote} 
The closure of the span of~$\delta_n$ (respectively, the closure of the span of $\hat{\delta}_n$)
coincide with the
$e$-fixed and $f$-fixed vectors on the space $\mathcal{S}^*$ of tempered distributions,
where   \begin{equation} \label{efdef} e = \left(\begin{array}{cc}
1&2\\0&1
\end{array}\right)\quad \text{and} \quad f = \left(\begin{array}{cc}
1&0\\2&1
\end{array}\right) \end{equation} 
act on $\mathcal{S}^*$ according  to the oscillator representation (see \S \ref{Weil} for details), namely 
$e$ and $f$ multiply $\phi$ and $\hat \phi$, respectively, by $e^{2 \pi i x^2}$, see \eqref{eq unipotent action}.
\end{quote}

  Let~$\Gamma$ be the group generated by~$e$ and~$f$ inside $\SL_2(\R)$: it is a free group, of index two in $\Gamma(2)$, and it lifts to the double cover $G$ of $\SL_2(\R)$. 
 As explicated in \S \ref{InterpCoh}, computations of dimensions of modular forms and  Theorem \ref{mainthm}  yield
\begin{equation} \label{win} \dim H^0(\Gamma, \mS^*)  =1, \quad \dim H^1(\Gamma, \mS^*) = 0.\end{equation} 

The final observation is that 
\begin{quote} The kernel and cokernel of $(\mS^*)^e \oplus  (\mS^*)^f  \to  \mS^*$
compute, respectively, the $H^0$ and $H^1$ of $\Gamma$ acting on $\mS^*$.
\end{quote}

This follows from a Mayer-Vietoris type long exact sequence 
that computes the cohomology of the free group $\Gamma$ \cite[II-III]{BrownCohomology}, namely, \begin{equation} \label{MWseq}  0 \to  H^0(\Gamma, \mS^*) \to  H^0\left(\langle e \rangle , \mS^*\right) \oplus H^0\left(\langle f \rangle, \mS^*\right) \to H^0(1,\mS^*) \to H^1(\Gamma, \mS^*) \to  \cdots \end{equation} 
Combined with \eqref{win}, we see that  $\mS^* = (\mS^*)^e+(\mS^*)^f$, i.e.  the desired surjectivity of $\Psi^*$,
and that the intersection of $(\mS^*)^e$ and $(\mS^*)^f$ 
is one-dimensional; this corresponds exactly to the Poisson summation formula.

Another way to look at this is the following. 
  The Poisson summation formula
is an obstruction to {\em surjectivity} in Theorem \ref{interpolation} and 
is closely related to the invariance of the distribution $\sum \delta_n \in \mathcal{S}^*$
by $\Gamma$, i.e. the existence of a class in the {\em zeroth} cohomology of $\Gamma$ on~$\mathcal{S}^*$.   The  above discussion
shows a less obvious statement: the obstruction to {\em injectivity}  in Theorem \ref{interpolation} is  precisely the {\em first} cohomology
of~$\Gamma$ on~$\mathcal{S}^*$.

\subsection{The proof of Theorem \ref{mainthm}.}

The analogue of Theorem \ref{mainthm} when $W$ is finite-dimensional
and $\Gamma \backslash G$ is compact   is (by now) a straightforward exercise;
as noted, the ideas go back at least to \cite{Eichler}, and
the general case is documented in   \cite{BW00};
the noncompact case is less standard but also well-known,
see e.g.  \cite{CasselmanHodge} and  \cite{Fr98} for a comprehensive treatment.

The main complication of our case is that the  coefficients are infinite-dimensional and one might think this  renders the question unmanageable.  The key point is that~$W$
is irreducible as a~$G$-module. This says that, ``relative to~$G$'',
it is just as good as a finite-dimensional representation. 

We will present two proofs of Theorem \ref{mainthm}:

\begin{itemize}
\item 
The first proof, in Section \ref{BOsec}, relies on the work of Bunke--Olbrich
who computed the cohomology  of lattices in $\SL_2(\R)$ with coefficients in 
(the distribution globalization of a) principal series representation.  We give a sketch
of the argument of \cite{BO} for the convenience of the reader, and also because
their argument as written does not cover the situation we need. To deduce Theorem \ref{mainthm}
from these results then requires us to pass from a principal series to a subquotient,
which we do in a rather {\em ad hoc} way.

\item 
The second proof is our original argument prior to learning of the work of Bunke-Olbrich just mentioned.
 It generalizes the standard way of computing $\Gamma$-cohomology
with finite dimensional coefficients, as given in \cite{BW00}, to the infinite dimensional case -- at least in cohomological degree $1$. Given
the content of \cite{BO}, we have permitted ourselves to abridge some tedious
parts of our original argument,
and   reproduce here in detail  the part that is perhaps most distinct from \cite{BO} -- 
namely, we  express the desired cohomology groups
in terms of certain $\Ext$-groups of $\gK$-modules and then compute these explicitly. 

\end{itemize}
 In both arguments the surjectivity of a Laplacian type operator
 plays an essential role. Such results are known since the work of Casselman \cite{CasselmanHodge}, and
 in their work,  Bunke--Olbrich prove and utilize such a  result both at the level of $G$ and $\Gamma \backslash G$.  We include a self-contained proof of  such a result for $\Gamma \backslash G$
 in \S  \ref{ourproof}.

 \subsection{Questions}

As we have noted, we prove an {\em abstract} interpolation result.
Can one recover the explicit formula for the interpolating functions, as given in \cite{RV19}, from this approach?
It seems to us that our
 proof is sufficiently explicit that this is, at least, plausible.

 It is very interesting to ask about the situation where $\Gamma$ is not a lattice. 
Indeed, if one were to ask about an interpolation formula
with evaluation points $0.9 \sqrt{n}$, 
one is immediately led to similar questions for a discrete but {\em infinite covolume}
subgroup of $\SL_2(\R)$, whereas considering $1.1\sqrt{n}$ leads to considering a non-discrete subgroup. Note that Kulikov, Nazarov and Sodin \cite{NSK}  have recently shown very general
results about Fourier uniqueness that imply, in particular, that evaluating $f$ and $\hat{f}$
at $1.1 \sqrt{n}$ do not suffice to determine $f$, but that evaluating them at $0.9\sqrt{n}$ \emph{does}.

 Perhaps a more straightforward question is to an establish an isomorphism
 \begin{equation} \label{verygeneral} H^i(\Gamma, W^*_{-\infty}) \simeq \mathrm{Ext}^i_{\mathfrak{g},K}(W,  \mbox{space of automorphic forms for $\Gamma \backslash G$}),\end{equation}
 which is valid for general lattices $\Gamma$ in semisimple Lie groups $G$
 and general irreducible (smooth, moderate growth) representations $V$ of $G$. Bunke and Olbrich have proved this in the cocompact case,
 and our  original argument proceeded by establishing the case $i=1$ for general lattices in $\SL_2(\R)$.
	Also, Deitmar and Hilgert prove \cite[Corollary 3.3]{DeiHil2005} a result of this type in great generality, but with the space of automorphic forms replaced by the larger space $C^{\infty}(\Gamma \backslash G)$
 without growth constraints.

 \subsection{Acknowledgements}
 
We are very grateful to Matthew Emerton for his interest and input on the paper. 
These discussions led us to  the work of Bunke and Olbrich. We also thank Henri Darmon and Joshua Mundinger for useful comments and suggestions. We thank the referees for comments and suggestions that led to improvements in the exposition.
  
 M.G-G. was supported by the CRM and McGill University. A.V. was supported by an NSF-DMS grant during the preparation of this paper.

\section{Covering groups of $\SL_2(\BR)$} \label{section metaplectic group}
 
 Let $q \geq 1$ be a positive integer and let
 $G$ be the $q$-fold covering of the group $\SL_2(\BR)$, i.e.
 $G$ is a connected Lie group  equipped with a  continuous homomorphism
 $G \rightarrow \SL_2(\BR)$ with kernel of order $q$. This characterizes $G$ up to unique isomorphism covering the identity of $\SL_2(\R)$. 
 
   Denote by $\fg$ the shared Lie algebra of $\meta$ and of $SL_2(\BR)$ and $\exp: \fg \to G$ the exponential map.
   Also denote by $K$ the preimage of $\mathrm{SO}_2(\R)$ inside $G$; it is abstractly isomorphic  as topological group to $S^1=\R/\Z$
   and we fix such an isomorphism below. 
   
   The quotient $G/K$ is identified with the hyperbolic plane $\BH$, on which $G$ acts by isometries. Define the norm of $g \in G$ to be $\|g\| := e^{\mathrm{dist}_{\BH}(i,gi)}$. Equivalently, we could use $\left\|\left(\begin{smallmatrix}
   	a&b\\c&d
   \end{smallmatrix}\right)\right\| =\sqrt{a^2+b^2+c^2+d^2}$ since either of these two norms is bounded by a constant multiple of the other.

\subsection{Lie algebra} \label{ss Lie algebra}
Let $H, X, Y$ be the standard basis for $\fg$:
$$ X = \left( \begin{array}{cc} 0 & 1 \\ 0 & 0 \end{array} \right), \quad 
Y = \ \left( \begin{array}{cc} 0 & 0 \\ 1 & 0 \end{array} \right), \quad H= \left( \begin{array}{cc} 1 & 0 \\ 0 & -1 \end{array} \right).$$
 We also use
$ \kappa=i(X-Y)$, $2p = H-i(X+Y)$, $2m = H+i(X+Y)$, or, in matrix form:
\begin{equation} \label{Lie elements def}  \kappa = \left( \begin{array}{cc} 0 & i \\ -i & 0 \end{array} \right), \quad 
2p = \ \left( \begin{array}{cc} 1 & -i \\ -i & -1 \end{array} \right),\quad  2m= \left( \begin{array}{cc} 1 & i \\ i & -1 \end{array} \right).\end{equation}
We have $\kappa = ik$, where $k$ generates the Lie algebra of $K$. 

 The elements $p,m$ and $\kappa$ satisfy the commutation relations 
\begin{equation} \label{commutation} [p, m] = \kappa,\quad  [\kappa, p] =2p,\quad  [\kappa, m]=-2m,\end{equation}
which say that $p$ and $m$ (shorthand for \emph{plus} and \emph{minus}) raise and lower $\kappa$-weights by $2$. 
 The Casimir element $\mathcal{C}$ in the universal enveloping algebra determined by the trace form is given by any of the equivalent formulas:
 \begin{equation} \label{AllTheCasimirExpressions} \mathcal{C}= \frac{H^2}{2} + XY+YX =  \frac{\kappa^2}{2} + pm + mp = \frac{\kappa^2}{2} + \kappa + 2mp = \frac{\kappa^2}{2} - \kappa + 2pm. 
 \end{equation}

\subsection{Iwasawa decomposition} \label{ss Iwasawa}
There is a decomposition \begin{equation}\label{eq Iwasawa}G = NAK\end{equation} where 
 $A$ and $N$ are the connected Lie subgroups of $G$ with Lie algebra $\mathbf{R}. H$ and
$\mathbf{R}. X$ respectively. We will parameterize elements of $A$ 
via
\[a_y := \exp(\frac{1}{2} \log(y) H),\] so that $a_y$ projects
to the diagonal element of $\SL_2(\R)$ with entries $y^{\pm 1/2}$. 
We will also write $n_x = \exp(xX)$.

\subsection{$\gK$-modules} \label{gKmodules}

We recall that a $\gK$ module $W$ means a $\mathfrak{g}$-module
equipped with a compatible continuous action of $K$.
Equivalently, it is described by the following data: 

\begin{itemize}
 
\item For each $\zeta \in q^{-1} \Z$, a vector space $W_{\zeta}$
giving the $\zeta$-weight space of $K$, so that $\kappa$ acts on $W_{\zeta}$ by $\zeta$; 
\item maps $p: W_{\zeta} \rightarrow W_{\zeta+2}$ and $m: W_{\zeta} \rightarrow W_{\zeta-2}$
satisfying $[p, m]=\kappa$. 
\end{itemize}

We recall some facts about classification, see \cite{HT12} for details. Irreducible, infinite-dimensional $(\fg,K)$-modules belong to one of three classes; in each case, the weight spaces $W_\zeta$ have dimension either zero or $1$. 
\begin{itemize}
	\item Highest weight modules of weight~$\zeta$; these are determined up to isomorphism 
	by the fact that their nonzero weight spaces occur in weights $\{\zeta, \zeta-2, \zeta-4, \dots\}$.
	$W_{\zeta}$ is killed by $p$.  One computes using \eqref{AllTheCasimirExpressions} that on such modules, the Casimir element~$\mathcal{C}$ acts by~$\zeta(\zeta+2)/2$. 
 
	\item Lowest weight modules of weight $\zeta$; these are determined up to isomorphism
	by the fact that their nonzero weight spaces occur in weights $\{\zeta, \zeta+2, \zeta+4, \dots\}$.
      $W_{\zeta}$ is killed by $m$. Again, \eqref{AllTheCasimirExpressions} shows that the Casimir element $\mathcal{C}$ acts by $\zeta(\zeta-2)/2$. 
	\item Doubly infinite modules, in which the weights are of the form $\zeta+ 2\BZ$ for $\zeta \in \frac{1}{q}\Z$. 
\end{itemize}
\begin{definition} \label{def complementary}

For an  infinite-dimensional irreducible $\gK$-module $W$ 
we define the complementary irreducible representation $W^{cl}$ to be 
	\[ W^{cl} = \begin{cases}
	\text{the irreducible $(\fg, K)$-module with 
		highest weight $\zeta-2$} & \text{$W$ has lowest weight $\zeta$} \\
	\text{the irreducible $(\fg, K)$-module with 
		lowest weight $\zeta+2$} & \text{$W$ has highest weight $\zeta$} \\
	W & \text{else.}
\end{cases}  \]
\end{definition}
 Note that the representation $W^{cl}$ can be finite-dimensional; this occurs exactly when $W$ is the underlying $(\mathfrak g,K)$-module of a discrete series representation on~$SL_2(\BR)$.

 In \S \ref{section our proof} we use the following key fact about $\gK$-modules.
\begin{prop} \label{comp}
Let $W$ be an irreducible infinite-dimensional $(\fg,K)$-module with Casimir eigenvalue $\lambda$.
Then, for any $\gK$-module $V$:
 \begin{itemize}
\item[(a)] If $\Cas-\lambda$ is surjective on $V$,
then
$\mathrm{Ext}^1_{(\fg,K)}(W, V)= 0$.

\item[(b)]  If $V$ is irreducible,  $\Ext^1_{(\fg, K)}(W,V)$ is one-dimensional if $V \simeq W^{cl}$, and is zero otherwise. 
 
\end{itemize}
 
\end{prop}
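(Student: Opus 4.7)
The plan is to compute $\Ext^1_{\gk}(W,V)$ via an explicit two-term projective resolution of $W$ built from the induced modules $M(\zeta) := U(\fg) \otimes_{U(\fk),K} \C_\zeta$. By Frobenius reciprocity, $\Hom_{\gk}(M(\zeta), V) = V_\zeta$, so these $M(\zeta)$ are projective in the $\gk$-category. The choice of resolution depends on the classification of $W$ given in \S\ref{gKmodules}. If $W$ is lowest weight with lowest weight $\zeta$, I would take the resolution
\[0 \to M(\zeta - 2) \xrightarrow{m \cdot} M(\zeta) \to W \to 0,\]
where the first map sends the generator to $m \cdot 1$; the quotient is the lowest weight Verma at $\zeta$, which equals $W$ for irreducible infinite-dimensional $W$. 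For highest weight $W$, I would use the symmetric resolution with $p$ in place of $m$. If $W$ is doubly infinite with $K$-type $\xi$, the resolution is
\[0 \to M(\xi) \xrightarrow{\Cas - \lambda} M(\xi) \to W \to 0,\]
whose exactness reduces to the fact that each weight space of $M(\xi)$ is free of rank one over $\C[\Cas]$, a direct PBW calculation.

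Applying $\Hom_{\gk}(-, V)$ then yields three explicit formulas: $\Ext^1_{\gk}(W,V)$ equals $\coker(m \colon V_\zeta \to V_{\zeta - 2})$ in the lowest weight case, $\coker(p \colon V_\zeta \to V_{\zeta + 2})$ in the highest weight case, and $\coker((\Cas - \lambda) \colon V_\xi \to V_\xi)$ in the doubly infinite case. From here, part (a) is quick. The doubly infinite case is immediate from the hypothesis. For lowest weight $W$, given $y \in V_{\zeta - 2}$, the identity \eqref{AllTheCasimirExpressions} gives $(\Cas - \lambda)|_{V_{\zeta - 2}} = 2pm - 2(\zeta - 2)$; by Casimir surjectivity I choose $z \in V_{\zeta - 2}$ with $(\Cas - \lambda) z = y$, and the element $v := 2pz \in V_\zeta$ then satisfies $mv = 2mpz = 2(pm z - (\zeta - 2) z) = y$ using $[m, p] = -\kappa$. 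Thus $m$ is surjective and $\Ext^1 = 0$; the highest weight case is symmetric.

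For part (b), if $V$ has Casimir eigenvalue $\mu \neq \lambda$, then $\Cas - \lambda$ acts on $V$ as the nonzero scalar $\mu - \lambda$, so (a) gives $\Ext^1 = 0$. Otherwise $V$ has central character $\lambda$, and I would evaluate the cokernel case by case using the classification of irreducibles with Casimir $\lambda$ from \S\ref{gKmodules}. In each case the relevant weight spaces are at most one-dimensional and the connecting map is either zero or an isomorphism (by irreducibility of $V$), and one reads off that the cokernel is one-dimensional precisely when $V \cong W^{cl}$ and zero otherwise. The main obstacle is the verification of the resolution used to derive the cokernel formula --- in particular the injectivity of $\Cas - \lambda$ on $M(\xi)$ in the doubly infinite case; once the cokernel formula is established, the remaining arguments are routine, though the case analysis for (b) requires some care in the degenerate situations where $W^{cl}$ is finite-dimensional or Vermas become reducible.
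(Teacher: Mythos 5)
Your proposal is correct, and it shares the computational core of the paper's proof --- namely the identity from \eqref{AllTheCasimirExpressions} showing that on $V_{\zeta-2}$ the operator $\Cas-\lambda$ factors through $m\colon V_\zeta\to V_{\zeta-2}$, so that surjectivity of the Casimir forces surjectivity of $m$ --- but it packages the homological algebra differently. The paper argues directly with extensions: it lifts the lowest weight vector of $W$ to the middle term $E$ and observes that the obstruction to correcting the lift lies in $V_{\zeta-2}/mV_\zeta$, which only yields an \emph{injection} $\Ext^1_{\gk}(W,V)\hookrightarrow V_{\zeta-2}/mV_\zeta$; consequently, for part (b) the paper must separately exhibit a nontrivial extension of $W$ by $W^{cl}$ by explicit computation. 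Your two-term projective resolution (projectivity of $M(\zeta)$ being exactly the exactness of $V\mapsto V_\zeta$ for the compact group $K$) upgrades the injection to an isomorphism $\Ext^1_{\gk}(W,V)\cong V_{\zeta-2}/mV_\zeta$ (resp.\ $\coker(\Cas-\lambda)$ on $V_\xi$), so the nontrivial extension in (b) comes for free, and you get $\Ext^{\geq 2}=0$ as a bonus. The price is the exactness checks you flag, and these do go through: a PBW leading-term argument shows right multiplication by $m$ is injective with the expected cokernel, and that each weight space of $M(\xi)$ is free of rank one over $\C[\Cas]$; the identification of the quotient with $W$ uses precisely the standing hypothesis that $W$ is irreducible and infinite-dimensional (for $\zeta\in\Z_{\leq 0}$ the lowest-weight Verma is reducible, but its irreducible quotient is then finite-dimensional and so excluded). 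Your case analysis in (b) is also sound: for $V$ irreducible with Casimir eigenvalue $\lambda$, the relations $pm=0$ on $V_\zeta$ and $mp=0$ on $V_{\zeta-2}$ together with irreducibility force $m\colon V_\zeta\to V_{\zeta-2}$ to be onto whenever both weight spaces are nonzero, so the cokernel survives exactly when $V_{\zeta-2}\neq 0$ and $V_\zeta=0$, i.e.\ when $V\cong W^{cl}$.
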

\begin{proof}

We will prove these statements in the case where
$W$ is a lowest weight module, which is the case of our main application.
The same proof works with slight modifications for $W$ a highest weight or doubly infinite module: in every case, one takes an arbitrary lift of a generating vector, and modifies it using the surjectivity of an appropriate operator.

We prove (a). Take $W$ to be   generated by a vector $v_{\zeta}$ of lowest weight $\zeta$ with
$m v _{\zeta} = 0$. This implies by the classification above that \begin{equation}\label{eq roots casimir}\lambda = \frac{\zeta(\zeta-2)}{2}.\end{equation} 
Take an extension $V \rightarrow E \rightarrow W$;
to give a splitting we must lift $w_{\zeta}$ to 
a vector in~$E$ of $K$-type $\zeta$
killed by $m$. Arbitrarily lift 
$w_{\zeta}$ to
$\tilde{w}_{\zeta} \in E_{\zeta}$.  
Then $m \tilde{w}_{\zeta} \in V_{\zeta-2}$ and it suffices to show that it lies inside
the image of $m: V_{\zeta} \rightarrow V_{\zeta-2}$, for we then
modify the choice of $\tilde{w}_{\zeta}$ by any preimage to get the desired splitting.  By \eqref{AllTheCasimirExpressions}  and \eqref{eq roots casimir} we see that $\Cas-\lambda: V_{\zeta-2} \rightarrow V_{\zeta-2}$
agrees with $2mp$. Since it is surjective, it follows that
in particular $m: V_{\zeta} \rightarrow V_{\zeta-2}$ is surjective. 
  
We pass to (b).  Suppose $V$ is irreducible;
  then $\mathrm{Ext}^1_{\gK}(W, V)$ vanishes unless $V$ has the same  $\mathcal{C}$-eigenvalue as $W$.  
  The argument above
  exhibits an injection of \[  \mathrm{Ext}^1_{\gK}(W, V) \hookrightarrow  V_{\zeta-2}/mV_{\zeta}\]
and inspection of $K$-types amongst those irreducibles with the same $\mathcal{C}$-eigenvalue as $V$ 
shows that this also vanishes unless $V \simeq W^{cl}$,
in which case it is one-dimensional. It remains only to exhibit a nontrivial extension of $W$ by $W^{cl}$,
which is readily done by explicit computation.
\end{proof}

\subsection{Globalizations} \label{ss globalization}

A globalization of a $\gK$-module $W$ is any continuous $G$-representation  on a topological vector space 
$\overline{W}$  such that $\left(\overline{W}\right)_K=W$. We will consider two instances of this: the \emph{smooth, or Casselman-Wallach globalization} $W_\infty$, and the \emph{distributional globalization} $W_{-\infty}$. 

Following \cite{Ca89}, the representation $W_\infty$ is the unique globalization of $W$ as a moderate growth Fr\'echet $G$-representation. By definition, such a representation is a Fr\'echet space $F$ (topologized with respect to a family of seminorms) such that for any seminorm $\|\cdot \|_\alpha$, there is an integer $N_\alpha$ and a seminorm $\| \cdot \|_\beta$ for which \[ \|gw\|_\alpha \leq \|g\|^{N_\alpha} \|w\|_\beta. \]

The distributional globalization is a dual notion. Indeed, denote by $W^*$ the $K$-finite part of the dual of $W$, equipped with the contragredient~$\gk$-module structure. Then \begin{equation}
	\label{eq dualitycompletions} (W_\infty)^* = (W^*)_{-\infty}
\end{equation} where on the left-hand side, the dual is understood as continuous.

We recall an explicit 
construction of $W_{-\infty}$, see \cite[\S 2-3]{BO}, although it will not be directly used in the rest of the paper:  Given $W^*$ as above, let $V^*\subset W^*$ be a finite-dimensional~$K$-stable subspace that generates $W^*$ as a $(\fg,K)$-module. Let $(V^*)^*=:V\subset W$ viewed as a   $K$-representation, and consider the space
\[ \mE_{V} = \left\{f \in C^\infty(G,V) \mid f(gk)=k^{-1}f(g),\,g \in G,\, k \in K \right\}. \] Then the image of $W$ under the map $i: W \to \mE_{V}$ characterized by 
$ \ip{ i(w)(g)}{v^*} := \ip{w}{gv^*} \ \ (w \in W, \; v^* \in V^*) $ belongs to the space $\mA^G_{V}$ of sections of moderate growth, i.e. of functions $f \in \mE_{V}$ such that for every  $X \in U(\fg)$, there is $R=R(f,X)$ for which \begin{equation} \label{moderate growth} \|f\|_{X,R} = \sup_{g\in G} \frac{|Xf(g)|}{\|g\|^R}<\infty. \end{equation}
We note that this differs from the notion of {\em uniform} moderate growth, where one requires $R$ to be taken independently of $X$. 

The space $\mA_V^G$ is topologized as the direct limit of Fr\'echet spaces with respect to the seminorms $\|\cdot \|_{X,R}$. The map $i$ is injective since $V^*$ generates $W^*$, and the distributional globalization is defined as \[W_{-\infty}:=\overline{i(W)} \subset \mA_V^G.\]

\newcommand{\sigmaximacro}{}
\newcommand{\sigmaximacrocomma}{}

 \section{First proof of Theorem \ref{mainthm}: resolutions of principal series.}  
\label{BOsec} 
In this section, we derive Theorem \ref{mainthm} from the results of Bunke--Olbrich \cite{BO}, adapting the arguments of Section 9 therein to non-spherical principal series. 
The two essential ingredients of this argument are the following points established by Bunke--Olbrich, 
which we shall use as ``black boxes'': 
\begin{itemize}
\item acyclicity of $\Gamma$ acting on spaces of moderate growth functions on $G/K$,  and
\item surjectivity of a Laplace-type operator acting on these spaces.
\end{itemize}
 The first point, at least, is intuitively reasonable: it asserts that moderate growth functions on $G/K$
behave like a free $\Gamma$-module; this is plausible since the $\Gamma$-action on $G/K$ is (at least, virtually) free. 

Given these, the idea of the argument for Theorem \ref{mainthm} is as follows. We will first show that principal series representations
are realized as spaces of moderate growth Laplacian eigenfunctions
on $G/K$; by the two points mentioned above, this   gives a resolution of the principal series by $\Gamma$-acyclic modules.
This permits us to compute cohomology of principal series representations. Finally, every irreducible representation
is realized as a subquotient of such a representation, and we will then prove Theorem \ref{mainthm} by a study
of the associated long exact sequence in cohomology.

\subsection{Setup}

Fix a Casimir eigenvalue $\lambda$, and a lattice $\Gamma \subset G$. Given $\zeta$ a one-dimensional representation of $K$, define the following spaces of smooth functions (compare with \ref{ss globalization}, and see \eqref{moderate growth} in particular for the notion of moderate growth,
which is {\em not} the  same as uniform moderate growth):  
\begin{align} \label{eq function}
	\mA^G, \;( \text{resp. } \mA ) &= \text{moderate growth functions on }G\; (\text{resp. on }\Gamma \dom G). \\  \nonumber
	\mA_\zeta^G,\;\mA_\zeta&=\text{subspace with right $K$-type $\zeta$: \ \ $f(gk)=f(g)\zeta(k)$.} \\ \nonumber
	\mA_\zeta^G(\lambda),\; \mA_\zeta(\lambda)&=\text{subspace with right $K$-type $\zeta$ and Casimir eigenvalue $\lambda$.}\\
	\Cusp_\zeta(\lambda)&=\text{ subspace of }\mA_\zeta(\lambda)\text{ consisting of cuspforms}. \nonumber
\end{align}

We will first prove a variant of Theorem \ref{mainthm} for principal series.  Let $B$ be the preimage of the upper triangular matrices inside $G$, which we recall is thet $q$-fold cover of $SL_2(\BR)$; 
we may write  
 \[B = MAN\]  where $A$ and $N$ are as in \eqref{eq Iwasawa}, and $M=Z_K(A) \simeq \BZ/2q\BZ$. Denote by $\xi \in \BC$ the character of $A$ sending $a(y) \mapsto y^\xi$. Given a pair of characters $(\sigma,\xi)$ of $K$ and $A$ respectively, let 
\begin{equation} \label{Hsigmaxidef}  H^{\sigmaximacro} = \{ f \in C^\infty(G) \mid f(mang) = a^{\xi+1}\sigma^{-1}(m)f(g), f \text{ $K$-finite} \} \end{equation} be the Harish-Chandra module of $K$-finite vectors in the corresponding principal series representation. 
This depends on $\sigma$ and $\xi$, but to simplify the notation we will not include them explicitly. 
We denote by
$H^{\sigmaximacro}_{-\infty}$ its distributional completion (\S \ref{ss globalization}); 
explicitly,  if we identify $H^{\sigmaximacro}$ as above
with functions on $K$ which transform on the left under the character $\sigma^{-1}$, 
then $H^{\sigmaximacro}_{-\infty}$ is the corresponding space of {\em distributions} on $K$.

 Let us explicate this in the language of
\S \ref{gKmodules}. We will parameterize $\sigma$ by the value of $d\sigma$ 
at $\kappa$; this is a class in $q^{-1} \Z$ that we will denote by $\zeta_0$. 
A $K$-basis of $H$ is given by vectors  $e_{\zeta}$ with $\zeta \in \zeta_0 + 2\Z$,
normalized to take value $1$ at the identity of $G$.  
The actions of raising and lowering operators
are as follows: 
\begin{equation} \label{Raising lowering H} p e_{\zeta} =  \frac{1}{2} (\zeta+1+\xi) e_{\zeta+2}  \mbox{ and } m e_{\zeta} = \frac{1}{2} (-\zeta+1+\xi) e_{\zeta-2},\end{equation} 

and the action of the Casimir on $e_{\zeta}$is thereby given by
$ \frac{\xi^2-1}{2}$. 
From these explicit formulas we readily deduce the following statements:
 \begin{itemize}
\item[(a)] If $1+\xi$ does not belong to $\pm \zeta_0+2\Z$, then $H$ is irreducible.
\item[(b)]  If $1+\xi$ belongs to either $ \zeta_0 +2\Z$ or $-\zeta_0+2\Z$ but not both, 
then $H$ 
has the structrure
\begin{equation} \label{eq decomposable principal series} 0 \to \bar{V}^{\sigmaximacro} \to H^{\sigmaximacro} \to  V^{\sigmaximacro} \to 0. 
\end{equation} where  $\bar{V}, V$ are irreducible $\gK$-modules; $\bar{V}$ is the module of highest (lowest)
weight $\zeta$ according to whether $-\xi-1$ or $1+\xi$ belongs to $\zeta_0+2\Z$, and $V= \bar{V}^{cl}$.

\item[(c)] If $1+\xi$ belongs to both\footnote{This happens only when $\zeta_0 \in \Z$, and in particular
the representation descends to a representation of $\SL_2(\R)$.} $\zeta_0+2\Z$  and $-\zeta_0+2\Z$,
and $\xi \geq 1$, then
$H$ has the structure of an extension
$$   V^+ \oplus V^- \rightarrow H \rightarrow F,$$
where  $V^-$ is the highest weight representation of weight $-\xi-1$, and $V^+$ the lowest weight representation of weight $\xi+1$,
whereas $F$ is the finite-dimensional representation of dimension $\xi$
with weights $-\xi+1, -\xi+3, \dots, \xi-1$.  A similar dual description is valid when $\xi \leq 0$,
where the finite dimensional representation now occurs as a subrepresentation.
 
\end{itemize}

   In the following proposition, we will assume that
   we are in either cases (a) or (b) of the above classification -- that is to say, 
    $H^{\sigmaximacro}$ is either irreducible, or decomposes as  
\begin{equation} \label{eq decomp PS} 0 \to \bar{V}^{\sigmaximacro} \to H^{\sigmaximacro} \to  V^{\sigmaximacro} \to 0. 
\end{equation} where both the subrepresentation and quotient are irreducible $\gK$-modules. 

\begin{prop}\label{prop nonspherical OB}
	Let $G$ be the degree $q$ connected cover of $SL_2(\BR)$. Denote by $\lambda$ the eigenvalue by which $\mathcal{C}$ acts on $H^{\sigmaximacro}_{K}$; then
	there are natural isomorphisms 
	\begin{align*}
		H^0(\Gamma, H^{\sigmaximacro}_{-\infty}) &\simeq  \mA_\zeta(\lambda) \\ H^1(\Gamma, H^{\sigmaximacro}_{-\infty}) &\simeq \Cusp_\zeta(\lambda) \\
		H^i(\Gamma, H^{\sigmaximacro}_{-\infty}) &= 0 \qquad\qquad  \text{for }i\geq 2. 
	\end{align*}
where $\zeta$ is any $K$-weight generating the dual $\gK$-module $H^{\sigmaximacrocomma*}$.
\end{prop}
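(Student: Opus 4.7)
The plan follows the Bunke--Olbrich strategy outlined at the start of this section: I will build a short $\Gamma$-acyclic resolution of $H^{\sigma,\xi}_{-\infty}$ and read off the cohomology from its $\Gamma$-invariants.  The first step is to realize $H^{\sigma,\xi}_{-\infty}$ inside $\mA^G_\zeta$ via matrix coefficients.  I would fix a generator $v_\zeta^*$ of the $\zeta$-weight line in $H^{\sigma,\xi,*}$ and send $w \in H^{\sigma,\xi}_{-\infty}$ to $g \mapsto \ip{w}{g v_\zeta^*}$; by the construction of $W_{-\infty}$ recalled in \S \ref{ss globalization} this function lies in $\mA^G_\zeta$, is annihilated by $\mathcal{C}-\lambda$, and the assignment is injective precisely because $\zeta$ generates $H^{\sigma,\xi,*}$ as a $\gK$-module.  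After identifying its image with the full eigenspace $\mA^G_\zeta(\lambda)$ and combining with the surjectivity of $\mathcal{C}-\lambda$ on $\mA^G_\zeta$ (the first Bunke--Olbrich input), I obtain a short exact sequence
\begin{equation*}
0 \longrightarrow H^{\sigma,\xi}_{-\infty} \longrightarrow \mA^G_\zeta \xrightarrow{\mathcal{C}-\lambda} \mA^G_\zeta \longrightarrow 0.
\end{equation*}

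The second Bunke--Olbrich input, $\Gamma$-acyclicity of $\mA^G_\zeta$, turns this into a $\Gamma$-acyclic resolution.  Taking $\Gamma$-invariants then computes $H^\ast(\Gamma, H^{\sigma,\xi}_{-\infty})$ as the cohomology of the two-term complex $\mA_\zeta \xrightarrow{\mathcal{C}-\lambda} \mA_\zeta$, which immediately yields $H^0 \simeq \mA_\zeta(\lambda)$, vanishing in degrees $\geq 2$, and $H^1 \simeq \coker(\mathcal{C}-\lambda)$.  To match the cokernel with $\Cusp_\zeta(\lambda)$, I plan a spectral/Hodge-type argument on $\Gamma \dom G$: cuspidal $\lambda$-eigenfunctions lie in the kernel of the formal adjoint of $\mathcal{C}-\lambda$ and therefore pair nontrivially with the cokernel, while the Eisenstein/continuous part of $\mA_\zeta$ is absorbed into the image, giving a canonical identification of $\coker(\mathcal{C}-\lambda)$ with $\Cusp_\zeta(\lambda)$.

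I expect the main obstacle to be twofold.  At the reducibility points of case (b), the map $H^{\sigma,\xi}_{-\infty} \to \mA^G_\zeta(\lambda)$ still has to be shown to be an \emph{iso}morphism; the hypothesis that $\zeta$ generates the dual is what should prevent spurious eigenfunctions on the right-hand side, but this will need to be verified by a careful comparison of $K$-types that takes into account the submodule structure recorded in \eqref{eq decomposable principal series}.  More seriously, the identification of $\coker(\mathcal{C}-\lambda)$ with cusp forms relies on a delicate analysis of Laplace-type operators on $\Gamma \dom G$ at the \emph{moderate growth} level (rather than $L^2$), and adapting the Bunke--Olbrich arguments — written for spherical principal series in the cocompact case — to non-spherical representations and to the non-cocompact setting is the principal adaptation that has to be carried out in this section.
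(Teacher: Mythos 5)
Your plan follows the paper's proof essentially step for step: the matrix-coefficient (Poisson-transform) realization $\mathcal{D}\mapsto \mathcal{D}(gv^*_\zeta)$ onto $\mA^G_\zeta(\lambda)$, the two-term $\Gamma$-acyclic resolution via surjectivity of $\Cas-\lambda$ on $\mA^G_\zeta$, passage to $\Gamma$-invariants, and identification of the cokernel of $\Cas-\lambda$ on $\mA_\zeta$ with $\Cusp_\zeta(\lambda)$ using surjectivity onto the orthogonal complement of cusp forms. One correction to your diagnosis of the obstacles: the surjectivity of the map onto $\mA^G_\zeta(\lambda)$ is not primarily a matter of $K$-type bookkeeping at reducibility points. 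The real content is (i) an ODE uniqueness argument in the $KAK$ coordinates showing that each left-and-right $K$-isotypic eigenfunction is a matrix coefficient, and then (ii) an \emph{analytic} convergence argument: writing $f=\sum_\xi f_\xi$ over left $K$-types, one must show the candidate preimage $\sum_\xi v_\xi$ converges in $H^{\sigmaximacro}_{-\infty}$, which requires polynomial bounds on $\|v_\xi\|$ in $|\xi|$; these are extracted from \emph{lower} bounds on matrix coefficients of the form \eqref{mcbound}. Your plan as written does not account for this convergence step, which is the genuinely nontrivial part of establishing \eqref{eq Poisson transform}; the generation hypothesis on $\zeta$ is what gives injectivity, and the remaining points you defer are exactly the Bunke--Olbrich inputs the paper also cites.
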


 The condition on $\zeta$  is automatic when $H$ is irreducible, and 
in the case when $H$ is reducible is equivalent to asking that $\zeta$ belongs to the $K$-weights of $\bar{V}^{\sigmaximacrocomma*}$.

\begin{proof}  

In \S 9 of \cite{BO} this result is proven in the case of $q=1$
and the trivial $K$-type. 
We will outline the argument to make clear that it remains valid in the 
situation where we now work, i.e., permitting a covering of $\SL_2(\R)$ and an arbitrary $K$-type. 
 
Fix $v^* \in H^{\sigmaximacrocomma*}$ of $K$-type $\zeta$, normalized
as in the discussion preceding
\eqref{Raising lowering H}.
 Then the rule sending $\mathcal{D} \in H^{\sigmaximacro}_{-\infty}$
to the function $\mathcal{D}( g v^*)$ on $G$ induces
an {\em isomorphism} 
 \begin{equation} \label{eq Poisson transform} H^{\sigmaximacro}_{-\infty} \simeq \mA^G_\zeta(\lambda). 
	\end{equation} 
We will outline a direct proof of this isomorphism. Injectivity, at least, follows readily: if $\mathcal{D}$ lies in the kernel, it would 
annihilate the $\gK$-module generated by $v^*$, which is all of $H^{\sigmaximacrocomma*}$, and by continuity $\mathcal{D}$ is then zero. 

For surjectivity, one first checks that $K$-finite functions lie in the image of the map -- that is to say,  a function $f$ of fixed right- and left- $K$-type, and with a specified
 Casimir eigenvalue, occurs in the image of the map above.  Such an $f$ is uniquely specified up to constants: using the decomposition $G=KAK$,
 the Casimir eigenvalue amounts to a second order differential equation for the function
 $y \mapsto f(a_y)$ for $y \in (1, \infty)$, and of the two-dimensional space of solutions 
 only a one-dimensional subspace extends smoothly over $y=1$; see
 \cite[p. 12-13]{Kitaev} 
 for an explicit description both of the differential equation and a hypergeometric
 basis for the solutions.\footnote{There are other references in the mathematical literature but Kitaev
 explicitly considers the universal cover.}  It follows from this uniqueness that $f$ must agree with  $\mathcal{D}(gv^*)$
 where $\mathcal{D}$ and $v^*$ match the left- and right- $K$-types of $f$. 
 To pass from surjectivity onto $K$-finite vectors to surjectivity, we take arbitrary $f \in \mA^G_{\zeta}(\lambda)$
 and expand it as a sum $\sum_{\xi} f_{\xi}$ of left $K$-types.  Each $f_{\xi}$ has a preimage $v_{\xi}$ according to the previous argument;
 so one must verify that $\sum_{\xi} v_{\xi}$ converges inside $H^{\sigmaximacro}_{-\infty}$, and for this it is enough to show that $\|v_{\xi}\|$ grows polynomially with respect to $|\xi|$ (here we compute $\|v_{\xi}\|$ as the $L^2$-norm restricted to $K$ in \eqref{Hsigmaxidef}).  For this  we ``effectivize'' the previous argument: The moderate growth property of $f$ implies 
a bound of the form $|f_{\xi}(g)| \leq c \|g\|^N$, uniform in $\xi$.
 On the other hand, $f_{\xi} = v_{\xi} (g v^*)$, and such a matrix coefficient always is not too small:
 \begin{equation} \label{mcbound} |v_{\xi} (g v^*)| \geq (1+|\xi|)^{-M} \|v_{\xi}\|
 \mbox{ for  {\em some} choice of }  \|g\| \leq (1+|\xi|)^M.
 \end{equation}
Such lower estimates on matrix coefficients can be obtained by keeping track of error bounds
in asymptotic expressions. They are developed in greater generality 
 in the Casselman--Wallach theory, see  e.g. Corollary 12.4 of \cite{BK14}
 for a closely related result.  Combining \eqref{mcbound} with the upper bound on $f_{\xi}$ shows that
  $\|v_{\xi}\| \leq  c (1+|\xi|)^{MN+M}$ as desired.

This concludes our sketch of proof of
\eqref{eq Poisson transform}, that is to say,  $H^{\sigmaximacro}_{-\infty}$
	 is the kernel of 
	\begin{equation} \label{BO sketch} \mA^G_\zeta \stackrel{\Cas - \lambda}{\longrightarrow} \mA^G_\zeta,\end{equation} 
in the notation of \eqref{eq function}. 
We now invoke surjectivity of a Laplace operator: the morphism $\Cas-\lambda$ of \eqref{BO sketch} is surjective, by \cite[Theorem 2.1]{BO}; 
and consequently \eqref{BO sketch} is in fact a resolution of $H^{\sigmaximacro}_{-\infty}$. 

 Moreover,  \cite[Theorem 5.6]{BO} asserts that the higher cohomology of $\Gamma$ acting on $\mathcal{A}^G_{\zeta}$ 
 vanishes; it is for this argument that Bunke and Olbrich use ``moderate growth'' rather than ``uniform moderate growth.'' 
	 Consequently, the $\Gamma$-cohomology of $H^{\sigmaximacro}_{-\infty}$ 
	 can be computed by taking $\Gamma$-invariants on the complex \eqref{BO sketch}:
	 	\begin{equation*} (\mA^G_\zeta)^{\Gamma} \stackrel{\Cas - \lambda}{\longrightarrow} (\mA^G_\zeta)^{\Gamma}. \end{equation*} 
	 Clearly, the $H^0$ here coincides with $\mA_\zeta(\lambda)$.
	 On the other hand, the image of $\Cas-\lambda$ contains the orthogonal complement of cusp forms
	 (see \cite[Thm. 6.3]{BO}, cf. Proposition \ref{casprop}), and so the $H^1$ coincides 
	 with the cokernel of $\Cas-\lambda$ acting on cusp forms; there we can pass to the 
	 orthogonal complement and identify 
	 $H^1 \simeq \Cusp_\zeta(\lambda)$ as desired.\footnote{In fact, $\Cas-\lambda$ is adjoint to $\Cas-\bar{\lambda}$, 
	 but the kernel of the latter of either is only nonzero if $\lambda$ is real, so we do not keep track of the complex conjugate.}
	 \end{proof}

	The following lemmas will be useful in the sequel.  We omit the proof of the first one.
	\begin{lemma} \label{missing isomorphism}
	Let $\zeta$ be, as in Proposition \ref{prop nonspherical OB}, a $K$-weight on $H^{\sigmaximacrocomma *}$
	which generates the latter as $\gK$-module; fix $v_{\zeta} \in H^*$ nonzero of weight $\zeta$.   For any $\gK$-module $V$,
	there is an isomorphism
	\begin{equation} \label{missing lemma eqn} \Hom_{\gK}(H^{\sigmaximacrocomma *}, V) \rightarrow V_{\zeta}(\lambda), \quad  f \mapsto f(v_{\zeta}),\end{equation}
	where $V_{\zeta}(\lambda)$ is the subspace of $V_{\zeta}$ killed by $\Cas-\lambda$. 
	\end{lemma}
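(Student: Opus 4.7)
The map $f \mapsto f(v_\zeta)$ is well-defined because any $\gK$-homomorphism preserves $K$-weights and the Casimir eigenvalue, so $f(v_\zeta) \in V_\zeta(\lambda)$. Injectivity is immediate from the generating hypothesis: if $f(v_\zeta) = 0$, then $f$ vanishes on the $\gK$-submodule generated by $v_\zeta$, which is all of $H^{\sigmaximacrocomma *}$.

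For surjectivity, fix $w \in V_\zeta(\lambda)$. The $K$-weight spaces of $H^{\sigmaximacrocomma *}$ are one-dimensional at each weight in $\zeta + 2\mathbf{Z}$; combined with the generating hypothesis, this yields a basis $\{p^k v_\zeta : k \geq 0\} \cup \{m^j v_\zeta : j \geq 1\}$. I define $f : H^{\sigmaximacrocomma *} \to V$ on this basis by
\[ f(p^k v_\zeta) := p^k w, \qquad f(m^j v_\zeta) := m^j w, \]
and verify $\gK$-linearity. Compatibility with $\kappa$ is automatic from the matching of weights, and compatibility with $p$ on the vectors $p^k v_\zeta$ (respectively $m$ on the vectors $m^j v_\zeta$) is tautological. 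The remaining cases reduce to identities of the form
\[ p \cdot m^j v_\zeta = \gamma_j\, m^{j-1} v_\zeta, \qquad m \cdot p^k v_\zeta = \delta_k\, p^{k-1} v_\zeta \]
in $H^{\sigmaximacrocomma *}$, for scalars $\gamma_j, \delta_k$ depending only on $j, k, \zeta$, and $\lambda$.

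The base case uses the expression $\Cas = \kappa^2/2 + \kappa + 2mp$ applied to $v_\zeta$, yielding $mp \, v_\zeta = \alpha_\zeta v_\zeta$ with $\alpha_\zeta = (\lambda - \zeta^2/2 - \zeta)/2$; the inductive step uses only $[p,m] = \kappa$ and the known action of $\kappa$ on each weight space. Crucially, this derivation depends only on the two relations $\kappa v_\zeta = \zeta v_\zeta$ and $\Cas v_\zeta = \lambda v_\zeta$ together with universal commutation in $U(\mathfrak{g})$; since both relations hold verbatim for $w$ in $V$, the corresponding identities $p \cdot m^j w = \gamma_j m^{j-1} w$ and $m \cdot p^k w = \delta_k p^{k-1} w$ follow, which proves $\gK$-linearity. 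The only real computation is the inductive identification of $\gamma_j, \delta_k$; this is a routine bookkeeping exercise in the $\mathfrak{sl}_2$ commutators and presents no conceptual obstacle. An equivalent and cleaner phrasing of the surjectivity argument is that the generating hypothesis identifies $H^{\sigmaximacrocomma *}$ with the universal $\gK$-module carrying a cyclic weight-$\zeta$ vector of Casimir eigenvalue $\lambda$, namely $U(\mathfrak{g})/U(\mathfrak{g})(\kappa - \zeta, \Cas - \lambda)$; the lemma then reduces to this universal property.
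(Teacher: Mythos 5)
Your proof is correct. Note that the paper explicitly omits the proof of this lemma (``We omit the proof of the first one''), so there is no argument of the authors' to compare against; your route --- constructing $f$ explicitly on the weight basis $\{p^k v_\zeta\}\cup\{m^j v_\zeta\}$, or equivalently identifying $H^{\sigmaximacrocomma *}$ with the universal module $U(\fg)\otimes_{\C[\kappa,\Cas]}\C_{\zeta,\lambda}$ and invoking tensor--hom adjunction --- is the natural one, and the key reductions ($2mp=\Cas-\kappa^2/2-\kappa$ and $2pm=\Cas-\kappa^2/2+\kappa$ as identities in $U(\fg)$, so that the structure constants $\gamma_j,\delta_k$ depend only on $\zeta$ and $\lambda$ and hence transfer verbatim to $w$) all check out. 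The one step you should make sure you regard as load-bearing is that the generating hypothesis, together with the one-dimensionality of the $K$-weight spaces of the principal series, is exactly what forces every $p^k v_\zeta$ and $m^j v_\zeta$ to be nonzero, so that they genuinely form a basis (equivalently, so that the canonical surjection from the universal module onto $H^{\sigmaximacrocomma *}$ is injective); you do invoke this, so the argument is complete.
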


	The second is a precise statement of Frobenius reciprocity, stated in a less formal way in
 \eqref{eq FrobRep}. 
 	\begin{lemma} \label{FrobRep}
	Let $V$ be a finite length $\gK$-module. 
	Then there is an isomorphism
	$$ H^0(\Gamma, V^*_{-\infty})  \simeq \Hom_{\gK}(V, \mA_K),$$
	where $V^*_{-\infty}$ is the distributional globalization of $V^*$. 
	\end{lemma}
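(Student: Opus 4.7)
The plan is to construct mutually inverse maps between the two sides using the duality \eqref{eq dualitycompletions}, which identifies $V^*_{-\infty}$ with the continuous dual of the Casselman--Wallach smooth globalization $V_\infty$. Under this identification, a class in $H^0(\Gamma, V^*_{-\infty})$ is a continuous linear functional $\ell$ on $V_\infty$ satisfying $\ell(\gamma v) = \ell(v)$ for all $\gamma \in \Gamma$ and $v \in V_\infty$.

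In one direction, given such $\ell$, I would attach the matrix-coefficient map $v \mapsto f_{\ell,v}$ where $f_{\ell,v}(g) := \ell(gv)$ for $v \in V \subset V_\infty$. Smoothness of $f_{\ell,v}$ follows from continuity of $\ell$ together with smoothness of the orbit map $g \mapsto gv$ into $V_\infty$; right $K$-finiteness holds since $v \in V_K$; and left $\Gamma$-invariance of $f_{\ell,v}$ as a function on $G$ is immediate from the hypothesis on $\ell$. Moderate growth of $f_{\ell,v}$ and of all its left-invariant $U(\fg)$-derivatives follows from the defining estimate $\|gv\|_\alpha \le \|g\|^{N_\alpha}\|v\|_\beta$ of a moderate growth Fr\'echet representation, applied to any seminorm dominating $\ell$. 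Finally, the $\gK$-equivariance of $v \mapsto f_{\ell,v}$ is a direct check, so the assignment lands in $\Hom_{\gK}(V, \mA_K)$.

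In the reverse direction, given $T \in \Hom_{\gK}(V, \mA_K)$, I would invoke the Casselman--Wallach theorem---valid since $V$ has finite length---to extend $T$ uniquely to a continuous $G$-equivariant map $\widetilde{T}: V_\infty \to \mA$, and then define $\ell_T(v) := \widetilde{T}(v)(e)$. Continuity of $\ell_T$ follows by composing $\widetilde{T}$ with the continuous evaluation-at-identity functional on $\mA$, so that $\ell_T \in V^*_{-\infty}$; and $\Gamma$-invariance holds because $\widetilde{T}(v)$ descends to a function on $\Gamma \backslash G$, giving $\ell_T(\gamma v) = \widetilde{T}(v)(\gamma) = \widetilde{T}(v)(e) = \ell_T(v)$. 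That the two constructions are mutually inverse then reduces to the identities $f_{\ell_T, v}(g) = T(v)(g)$ for $v \in V$, and $\ell_T = \ell$ when $T$ itself arises from $\ell$ by the first construction, both of which are immediate from unwinding definitions.

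The main obstacle is the Casselman--Wallach step in the reverse direction: one must extend $T$ continuously to $V_\infty$ with image still enjoying moderate growth, so that pointwise evaluation at $e$ defines a continuous functional. The finite-length hypothesis on $V$ is essential here: a finite $\gK$-generating set of $T(V)$ shares a common moderate growth exponent, which situates $\widetilde{T}$ inside a single Fr\'echet piece of the inductive limit $\mA$, allowing one to apply Casselman--Wallach in the usual moderate growth Fr\'echet setting.
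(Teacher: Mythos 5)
Your argument is essentially the paper's own (see Remark \ref{FrobRepProof}): identify $H^0(\Gamma, V^*_{-\infty})$ with continuous $G$-maps $V_\infty \to \mA$ via matrix coefficients in one direction and evaluation at the identity in the other, and use Casselman--Wallach automatic continuity to see that restriction to $K$-finite vectors is bijective onto $\Hom_{\gK}(V,\mA_K)$. The one point handled differently: to place $T(V)$ inside a single moderate-growth Fr\'echet representation you need a growth exponent uniform over all of $U(\fg)$, not merely a common exponent for a finite generating set (moderate growth allows $R$ to depend on the derivative), and the paper obtains this from the fact that $Z(\fg)$-finite, $K$-finite functions of moderate growth automatically have \emph{uniform} moderate growth \cite[5.6]{Borel97}.
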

 
One of the earliest versions of such a statement can be found in  \cite[Chapter 1, \S 4]{GGP}. For completeness we outline the proof, in our language, in Remark \ref{FrobRepProof}.

 For reducible principal series as in \eqref{eq decomp PS}, we prove:
\begin{prop} \label{propquotient} 
	Let $H^{\sigmaximacro}_{-\infty}$ with  Casimir eigenvalue $\lambda$ decompose as in \eqref{eq decomp PS}. Then
	the quotient map $H \rightarrow V$ induces an isomorphism, after passing to distribution globalizations and $\Gamma$-cohomology,
	 \[ H^1(\Gamma, V^{\sigmaximacro}_{-\infty})\simeq H^1(\Gamma, H^{\sigmaximacro}_{-\infty}) \left( \simeq \Cusp_\zeta(\lambda), \mbox{
	 by Proposition \ref{prop nonspherical OB}} \right). \] 
 \end{prop}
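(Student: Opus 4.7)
My plan is to apply the long exact sequence in $\Gamma$-cohomology attached to \eqref{eq decomp PS} and reduce the statement to vanishing in higher cohomology.

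Since the distributional globalization functor is exact on finite-length $\gK$-modules, \eqref{eq decomp PS} yields a short exact sequence $0 \to \bar V^{\sigmaximacro}_{-\infty} \to H^{\sigmaximacro}_{-\infty} \to V^{\sigmaximacro}_{-\infty} \to 0$ of $\Gamma$-modules. Proposition \ref{prop nonspherical OB} gives $H^i(\Gamma, H^{\sigmaximacro}_{-\infty}) = 0$ for $i \geq 2$, so the tail of the associated long exact sequence reads
\[
\cdots \to H^1(\Gamma, \bar V_{-\infty}) \to H^1(\Gamma, H_{-\infty}) \to H^1(\Gamma, V_{-\infty}) \to H^2(\Gamma, \bar V_{-\infty}) \to 0.
\]
The asserted isomorphism is therefore equivalent to $H^2(\Gamma, \bar V_{-\infty}) = 0$ together with the vanishing of the map $H^1(\Gamma, \bar V_{-\infty}) \to H^1(\Gamma, H_{-\infty})$.

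For the $H^2$-vanishing, I would pair \eqref{eq decomp PS} with the \emph{mirror} short exact sequence $0 \to V \to H^{\sigma,-\xi} \to \bar V \to 0$: the principal series $H^{\sigma,-\xi}$ has the same Casimir eigenvalue $\lambda$ and, in case (b), sits in a nontrivial extension with the roles of $V$ and $\bar V$ exchanged (this follows readily from the classification preceding \eqref{eq decomp PS}, since $1+(-\xi) = 1-\xi$ and $-(-\xi)-1 = \xi-1$ belong to the opposite $2\Z$-cosets of $1+\xi$ and $-\xi-1$). Proposition \ref{prop nonspherical OB} applies to $H^{\sigma,-\xi}$ as well, giving $H^{\geq 2}(\Gamma, H^{\sigma,-\xi}_{-\infty}) = 0$. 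Splicing the two long exact sequences produces, for every $i \geq 2$, periodic isomorphisms $H^i(\Gamma, \bar V_{-\infty}) \simeq H^{i+1}(\Gamma, V_{-\infty}) \simeq H^{i+2}(\Gamma, \bar V_{-\infty})$. Since $\Gamma$ has virtual cohomological dimension at most $2$ (it acts properly discontinuously on $\BH$), $\Gamma$-cohomology vanishes in degrees $\geq 3$ for any coefficient module, and the periodicity forces $H^2(\Gamma, \bar V_{-\infty}) = 0$.

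The remaining task is to show that the connecting map $H^0(\Gamma, V_{-\infty}) \to H^1(\Gamma, \bar V_{-\infty})$ is surjective. My approach is to realize both sides explicitly via automorphic and cuspidal forms: using Lemma \ref{FrobRep} identifies $H^0(\Gamma, V_{-\infty})$ as $\Hom_{\gK}(V^*, \mA_K)$, a subspace of automorphic forms of appropriate $K$-type cut out by a lowest/highest weight condition; and, via the mirror long exact sequence, $H^1(\Gamma, \bar V_{-\infty})$ is realized as a quotient of $H^1(\Gamma, H^{\sigma,-\xi}_{-\infty}) \simeq \Cusp_{\zeta''}(\lambda)$. Surjectivity of the connecting map would then follow by constructing, for each prescribed cuspidal class, an automorphic form whose non-$\Gamma$-invariant lift to $H_{-\infty}$ (via the Poisson realization \eqref{eq Poisson transform}) has the requisite coboundary in $\bar V_{-\infty}$. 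This last step is the main obstacle: the $H^2$-vanishing is essentially formal once the pairing via the mirror sequence is observed, but the surjectivity of the connecting map demands concrete control over non-invariant lifts through $H_{-\infty} \to V_{-\infty}$ and the resulting $1$-cocycles valued in $\bar V_{-\infty}$.
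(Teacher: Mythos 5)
Your reduction is set up correctly, and the first half of your argument — the vanishing of $H^2(\Gamma,\bar V_{-\infty})$ via the mirror principal series and the bound on (virtual) cohomological dimension — is essentially the paper's own argument for that point. But the proposal stops short of a proof at exactly the step you flag as "the main obstacle": the surjectivity of the connecting map $H^0(\Gamma,V_{-\infty})\to H^1(\Gamma,\bar V_{-\infty})$. This is not a technical loose end; it is the entire content of the proposition (it is what distinguishes $H^1$ of the quotient from $H^1$ of the sub), and the route you sketch — constructing, for each cuspidal class, an explicit non-invariant lift through $H_{-\infty}\to V_{-\infty}$ and controlling the resulting $1$-cocycle — is not carried out and would be genuinely hard to carry out directly.

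The paper sidesteps any computation of the connecting map by a dimension count. Writing $\Omega: H^0(\Gamma,H_{-\infty})\to H^0(\Gamma,V_{-\infty})$ for the map in the long exact sequence, exactness identifies $\coker\Omega$ with the image of the connecting map inside $H^1(\Gamma,\bar V_{-\infty})$; on the other hand, the mirror sequence exhibits $H^1(\Gamma,\bar V_{-\infty})$ as a quotient of $H^1(\Gamma,\bar H_{-\infty})\simeq \Cusp_\chi(\lambda)$ for a suitable weight $\chi$. So it suffices to produce a subspace of $H^0(\Gamma,V_{-\infty})$ of dimension $\dim\Cusp_\chi(\lambda)$ meeting the image of $\Omega$ trivially. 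Via Frobenius reciprocity (Lemma \ref{FrobRep}), $H^0(\Gamma,V_{-\infty})\simeq \Hom_{\gK}(V^*,\mA_K)$, and the subspace is taken to be the homomorphisms landing in cusp forms. Its dimension equals $\dim\Cusp_\chi(\lambda)$ by Lemma \ref{missing isomorphism} applied to $\bar H$ together with semisimplicity of $\Cusp_K$. That it avoids the image of $\Omega$ follows because the dual sequence $0\to V^*\to H^*\to\bar V^*\to 0$ is a \emph{non-split} extension: composing a putative extension $H^*\to\mA_K$ with the orthogonal projection onto cusp forms (which is defined on moderate growth functions) would give a map from $H^*$ to a semisimple module that is nonzero on the submodule $V^*$, hence would split the extension — a contradiction. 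If you replace your final step with this counting argument, the proof closes; as written, it does not.
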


\begin{proof}
 The discussion around \eqref{eq decomposable principal series} shows that
inverting both $\xi$ and $\sigma$ gives rise to another principal series $\bar{H}$ which fits into the exact sequence
 \begin{equation} \label{eq decomp comp ps} 0 \to {V}^{\sigmaximacro} \to \bar{H}^{\sigmaximacro} \to  \bar{V}^{\sigmaximacro} \to 0,  \end{equation} i.e. for which the roles of subrepresentation and quotient are swapped between $\bar{V}^{\sigmaximacro}$ and $V^{\sigmaximacro}$. 
We will deduce the result  by playing off Proposition \ref{prop nonspherical OB}  applied to  (the distribution globalization of) $H$, and the same Proposition applied to $\bar{H}$.

We first consider the long exact sequence associated to (the distribution globalization of) \eqref{eq decomp PS}, namely: 
\begin{align} \label{eq long exact sequence}  0 &\to H^0(\Gamma,\bar V^{\sigmaximacro}_{-\infty}) \to H^0(\Gamma, H^{\sigmaximacro}_{-\infty})   \stackrel{\Omega}{\to}H^0(\Gamma, V^{\sigmaximacro}_{-\infty}) \to \\ \nonumber&\to H^1(\Gamma,\bar V^{\sigmaximacro}_{-\infty}) \to H^1(\Gamma, H^{\sigmaximacro}_{-\infty}) \stackrel{\Pi}{\to}  H^1(\Gamma, V^{\sigmaximacro}_{-\infty}) \to 0. \end{align}  
We have used here that the next group $H^2(\Gamma,\bar V^{\sigmaximacro}_{-\infty})$ 
of the sequence vanishes:
it is isomorphic to $ H^3(\Gamma,V^{\sigmaximacro}_{-\infty})$
by  the long exact sequence associated to \eqref{eq decomp comp ps}
and Proposition \ref{prop nonspherical OB}, and that $H^3$ vanishes always.
Indeed, let $\overline{\Gamma}$ be the image of $\Gamma \rightarrow \mathrm{PSL}_2(\R)$,
and $\mu \leqslant \Gamma$ the kernel of $\Gamma \rightarrow \overline{\Gamma}$;
if $V$ is a $\C[\Gamma]$-module then $H^i(\Gamma,V) = H^i(\overline{\Gamma}, V^{\mu})$,
and, being a lattice in $\mathrm{PSL}_2(\R)$, the virtual cohomological dimension of $\overline{\Gamma}$ is at most $2$.

We  must show that the penultimate map $\Pi$ of \eqref{eq long exact sequence} is an isomorphism.  For this
it is enough to show that $$ \dim \ \mathrm{cokernel} \ \Omega  \geq \dim \ H^1(\Gamma,\bar V^{\sigmaximacro}_{-\infty}).$$

By applying Proposition \ref{prop nonspherical OB} to $\bar{H}^{\sigmaximacro}$, we find that $H^1(\Gamma, \bar{V}^{\sigmaximacro}_{-\infty})$ is a quotient of $\Cusp_{\chi}(\lambda)$, for $\chi$ a weight in 
$V^{\sigmaximacrocomma *}$. 
It therefore suffices to show that
\begin{equation} \label{coker dim}  \dim  \ \ \mathrm{cokernel} \ \Omega \geq \dim\ \Cusp_{\chi}(\lambda).
\end{equation} 
 
We will prove this by exhibiting a subspace
\begin{equation} \label{cuspsub} H^0(\Gamma, V_{-\infty})^{\mathrm{cusp}} \subset H^0(\Gamma, V^{\sigmaximacro}_{-\infty})\end{equation}  of the codomain
 of $\Omega$,  which does not meet the image of $\Omega$, and whose dimension equals that of
  $\Cusp_{\chi}(\lambda)$.

The space $H^0(\Gamma, V^{\sigmaximacro}_{-\infty})$  
 is identified, by means of Frobenius reciprocity   (Lemma \ref{FrobRep})
with the space of homomorphisms  from the dual $\gK$-module  
$V^{\sigmaximacrocomma *}$ to the $K$-finite vectors $\mA_K$ in the space of automorphic forms. We define  $H^0(\Gamma, V_{-\infty})^{\mathrm{cusp}}$ 
 to be the subspace corresponding
 to homomorphisms $V^* \rightarrow \mA_K$ that are actually valued in cusp forms. 
 We now show the two properties of this subspace  $H^0(\Gamma, V_{-\infty})^{\mathrm{cusp}}$
 asserted after \eqref{cuspsub}:
 
 \begin{itemize}
\item[-] Its dimension  equals that of $\Cusp_{\chi}(\lambda)$. 
To see this, apply Lemma \ref{missing isomorphism} to $\bar{H}$, with $\zeta=\chi$ and
$V$ the $K$-finite vectors of the space of cusp forms; it   yields an isomorphism
$$  \Hom_{\gK}(H^{\sigmaximacrocomma *}, \Cusp_{K}) \simeq \Cusp_{\chi}(\lambda).$$
 But homomorphisms from $H^*$
to $\Cusp_K$ factor through $V^{\sigmaximacro *}$
by semisimplicity of the space of cusp forms (which in turn follows by unitarity).  
This shows that the space $\Hom_{\gK}(V^*, \Cusp_K)$ has the same dimension as $\Cusp_{\chi}(\lambda)$, as required.

\item[-] It intersects trivially the image of~$\Omega$.
This amounts to the statement that no homomorphism from $V^{\sigmaximacrocomma *}$
to $\Cusp_K$ can be extended to a homomorphism from $H^{\sigmaximacrocomma *}$
to $\mA_K$. 
Suppose, then, that  $f: H^{\sigmaximacrocomma*} \rightarrow \mA_K$
is a $\gK$-module homomorphism 
whose restriction to $V^{\sigmaximacrocomma *}$ is nonzero and has cuspidal image. 
We now make use of the
 orthogonal projection map from all automorphic forms
 to cusp forms, 
 which exists because  one can sensibly take
the inner product of a cusp form with any function of moderate growth. 
Post-composing $f$ with this projection gives a morphism
from $H^{\sigmaximacrocomma*}$ to the semisimple $\gK$-module $\Cusp_K$;
 since $H^{\sigmaximacrocomma*}$ is a nontrivial extension of $\bar{V}^{\sigmaximacrocomma *}$ by $V^{\sigmaximacrocomma *}$, this morphism is necessarily
trivial on the subrepresentation $V^{\sigmaximacrocomma *}$, a contradiction. 
\end{itemize}
 
\end{proof}

	 Now let us deduce Theorem \ref{mainthm}.   We divide into three cases 
	 according to how the representation $W$ of the theorem can be fit into a principal series. Our division
	 corresponds  
	 to the division (a), (b), (c) enunciated after
	 \eqref{Raising lowering H}, and the statements below about the structure of $W$ can all be deduced from the statements given there.
	 \begin{itemize}
	 \item $W$  is an irreducible principal series, equivalently, $W$ is doubly-infinite. In  this case, $W^{cl} = W$, and combining Proposition \ref{prop nonspherical OB} and Lemma \ref{missing isomorphism}
	 gives the statement of Theorem \ref{mainthm}. 
	 
	  \item $W$ is an irreducible subquotient of a principal series $H$ with exactly two composition factors. 
	  In this case we can suppose that  $W=V^{\sigmaximacrocomma*}$
with notation as in \eqref{eq decomp PS}.   In that notation  we have $W^* = V^{\sigmaximacro}$, and $W^{cl} = \bar{V}^{\sigmaximacrocomma*}$.  Proposition \ref{propquotient} gives $H^1(\Gamma, V^{\sigmaximacro}_{-\infty}) \simeq \Cusp_{\zeta}(\lambda)$, and  Lemma \ref{missing isomorphism}
shows that $\Cusp_{\zeta}(\lambda)$ is identified with the space of $\gK$-homomorphisms
from $H^{\sigmaximacrocomma *}$ to the space of cusp forms;  by semisimplicity of the target
such a homomorphism factors through  the irreducible quotient $\bar{V}^{\sigmaximacrocomma*}=W^{cl}$.
This proves Theorem \ref{mainthm} in this case. 
	  
	  \item $W$ is an irreducible subquotient of a principal series with more than two composition factors.
	  In this case, $W$ is necessarily   a highest- or lowest- weight module factoring through $\SL_2(\R)$,
	  and there is an exact sequence 
	  \begin{equation} \label{fhd} F \rightarrow H \rightarrow \mathcal{D},\end{equation} 
	  where $F$ is finite-dimensional and $\mathcal{D}$ is the sum of $W^*$ and another highest- or lowest- weight module. 
	  Here, $W^{cl} = F^* \simeq F$ 
	  and Theorem \ref{mainthm} is equivalent to the vanishing of $H^1(\Gamma, W^*_{-\infty})$. 
  In the case of a discrete series that factors through $\mathrm{PSL}_2(\R)$, 
this vanishing follows from  \cite[Prop. 8.2]{BO}, 
and 
  the remaining case of an ``odd'' discrete series is handled by the same argument. Namely, use the
  long exact sequence associated to
  \eqref{fhd};
the argument of Proposition \ref{prop nonspherical OB}  shows that $H^1(\Gamma, H_{-\infty})=0$, and also $H^2(\Gamma, F) =0$  by Poincar{\'e} duality because $F$ is nontrivial. Thus also $H^1(\Gamma, \mathcal{D}_{-\infty})=0$ and so its summand $H^1(\Gamma, W^*_{-\infty})$ also vanishes.

	  \end{itemize}

  \section{Second proof of Theorem \ref{mainthm}:   extensions of $\gK$-modules} \label{section our proof}

 Our original proof of Theorem \ref{mainthm} proceeds by a reduction to a computation in the category of $\gK$ modules. 
 The two essential ingredients of this argument are the following points:
\begin{itemize}
\item[(a)] the Casselman--Wallach theory \cite{Ca89,Wa92} which  gives a canonical equivalence between suitable categories
of {\em topological} $G$-representations and {\em algebraic} $\gK$-modules. 
\item[(b)] surjectivity of a Laplace-type operator acting, now, on spaces of moderate growth functions on $\Gamma \backslash G$;
\end{itemize}
We will not prove (a), although we will briefly sketch an  elementary proof
of what we use from it. We will prove (b) in the next section. 
 
 Let $\lambda$ be the eigenvalue by which the Casimir 
 $\Cas \in Z(\fg)$ of
 \eqref{AllTheCasimirExpressions} acts on
 $W$ (the irreducible $\gK$-module from the statement of Theorem \ref{mainthm}). We will use the notation $\mA$ from \eqref{eq function} for the space of smooth, {\em uniform} moderate growth functions~$f$ 
 on $\Gamma \dom G$,
 i.e. for which there exists $R$ such that for all $X \in \mathfrak{U}$,  \begin{equation} \label{uniform moderate growth} \|f\|_{X,R} = \sup_{g\in G} \frac{|Xf(g)|}{\|g\|^R}<\infty. \end{equation} 
(compare with \eqref{moderate growth}, and beware that we are using the same notation as in Section~\ref{BOsec}, but for a slightly different space).  We use {\em uniform} moderate growth because
it interfaces more readily with the Casselman--Wallach theory;
by contrast, \S \ref{BOsec} used moderate growth because this is used in the acyclicity result
mentioned after \eqref{BO sketch}. 
 
 Also consider the following subspaces of $\mA$: \begin{align*}
 	\mA_{\lambda-\nil} &= \text{ $K$-finite functions on which $\Cas-\lambda$ acts nilpotently},\\
 	\Cusp(\lambda) &= \text{ subspace of $\mA_{\lambda-\nil}$ consisting of cusp forms.}
 \end{align*}

The precise form of (b) we will use is this:

\begin{prop} \label{casprop}
The image of the map~$\Cas-\lambda: \mA_{K} \rightarrow \mA_K$
is precisely the orthogonal complement to $\Cusp(\lambda)$ inside $\mA_K$. 
  \end{prop}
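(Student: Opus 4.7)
The equality has an easy and a hard containment. For the easy one, $\mathrm{image}(\Cas-\lambda) \subseteq \Cusp(\lambda)^\perp$, note that $\Cas$ is formally self-adjoint (it comes from the symmetric, $\mathrm{Ad}$-invariant trace form on $\fg$) and that cusp forms decay rapidly in every cusp. Thus for $g \in \mA_K$ of polynomial growth and $\phi \in \Cusp(\lambda)$, integration by parts over $\Gamma \backslash G$ produces no boundary contribution and gives $\ip{(\Cas-\lambda)g}{\phi} = \ip{g}{(\Cas-\lambda)\phi} = 0$. One also checks that $\lambda \in \R$ whenever $\Cusp(\lambda) \neq 0$: since $\Cas$ is self-adjoint on the $L^2$-cusp forms, the eigenvalue on any nonzero element of $\Cusp(\lambda)$ is real, so the adjoint of $\Cas-\lambda$ on the $L^2$ pairing is itself.

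For the reverse containment, I fix $f \in \mA_K$ orthogonal to $\Cusp(\lambda)$ and construct a preimage $u \in \mA_K$ by a parametrix-plus-correction argument. Using the Iwasawa decomposition \eqref{eq Iwasawa} and a partition of unity on $\Gamma \backslash G$, split $f$ into a ``bulk'' piece $f_1$ whose constant Fourier coefficient along $N$ vanishes at every cusp and a ``constant term'' piece $f_2$ supported deep in each cusp. Rapid decay places $f_1$ inside $L^2_{\mathrm{cusp}}$; since $\Cas$ acts there with discrete real spectrum and $f_1$ remains orthogonal to the $\lambda$-eigenspace, $(\Cas - \lambda)^{-1} f_1$ is defined by the spectral expansion, and one checks it lies in $\mA_K$. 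The piece $f_2$, restricted to a Siegel set for each cusp and decomposed by $K$-type and $A$-weight, satisfies a second-order ODE in the Iwasawa variable $y$ whose general solution is explicit and of hypergeometric type (compare the expansions used at \eqref{eq Poisson transform}). Selecting the subdominant solution yields a preimage of polynomial growth; gluing across cusps via the partition of unity produces a function $u_0 \in \mA_K$ with $(\Cas - \lambda) u_0 - f$ supported in a compact piece of $\Gamma \backslash G$. This remainder is still orthogonal to $\Cusp(\lambda)$ and lies in $L^2$, and is absorbed into the $L^2$-step above.

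The main obstacle is tracking \emph{uniform} moderate growth throughout, since elements of $\mA_K$ may involve arbitrarily high $K$-types and an unbounded power of derivatives. The ODE step requires that the subdominant hypergeometric solution has polynomial growth with an exponent controlled independently of the $K$-type, which follows from classical asymptotic analysis with explicit error estimates. The $L^2$ step requires that the inverse of $\Cas-\lambda$ on the orthogonal complement of $\Cusp(\lambda)$ sends uniformly moderate-growth functions to uniformly moderate-growth functions: this is obtained by iterating $\Cas$ (together with a ``smoothing'' argument using Sobolev embedding and a spectral gap for $\Cas$ around $\lambda$) to upgrade the $L^2$-convergent spectral expansion to a uniformly convergent one with polynomial weights. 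Continuous-spectrum or residual contributions at the level $\lambda$ are handled as in the Eisenstein-derivative argument sketched for Proposition \ref{prop nonspherical OB}, which retain moderate growth across any pole of the resolvent.
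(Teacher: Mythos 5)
Your overall architecture is the same as the paper's: split $f$ into a constant-term piece supported in the cusps, solved by an explicit ODE in the Iwasawa variable $y$ (the paper uses variation of parameters for the Euler equation $2y^2\frac{d^2}{dy^2}-\lambda$; the hypergeometric solutions you invoke belong to the $KAK$ picture, not the $NAK$ one, but this is cosmetic), plus a rapidly decaying piece inverted by spectral theory. However, there is a genuine gap in your spectral step. After cutting off the constant terms with a partition of unity, the bulk piece $f_1$ has \emph{rapidly decaying} constant terms, not \emph{vanishing} ones (the cutoff leaves a nonzero constant term in the transition region, and the full constant term is not a $\Gamma$-invariant function one could subtract globally). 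So $f_1$ lies in $L^2$ but not in $L^2_{\mathrm{cusp}}$, and $\Cas$ does \emph{not} act on the relevant space with discrete spectrum: the spectral expansion of $f_1$ involves the continuous (Eisenstein) spectrum and the non-cuspidal discrete spectrum. The same objection applies to your compactly supported remainder $(\Cas-\lambda)u_0-f$.

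This matters because inverting $\Cas-\lambda$ spectrally requires dividing by $\lambda_t-\lambda$ in the Eisenstein integral and by $\lambda_i-\lambda$ over the full discrete spectrum; this is only possible if the coefficients $\langle f_1,E^j_t\rangle$ vanish at the finitely many $t$ with $\lambda_t=\lambda$ and if $f_1$ is orthogonal to all of $\Discrete(\lambda)$, including residual spectrum such as residues of Eisenstein series. The hypothesis only gives orthogonality to $\Cusp(\lambda)$, and subtracting $N$-invariant cusp pieces preserves exactly that and nothing more. Your closing sentence about ``handling'' continuous or residual contributions does not address this: the obstruction is not a pole of the resolvent that one can regularize, but a component of $f_1$ in the kernel of $\Cas-\lambda$ on which no preimage exists by spectral means. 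The paper's Lemma \ref{cutoffLemma} resolves this by exploiting the freedom in the cutoff: one adds correction functions $\psi_i$ supported in the cusps (chosen via a surjectivity argument whose dual is the injectivity of the constant-term map on $\widetilde{\Eis}(\lambda)$, using real-analyticity) so that the rapidly decaying piece becomes orthogonal to $\Eis(\lambda)\oplus\Discrete(\lambda)$, while the offending components are shunted into the cusp pieces, where the ODE argument needs no orthogonality at all. Without this (or an equivalent device), your construction of $(\Cas-\lambda)^{-1}f_1$ does not get off the ground.
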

  
  This is almost \cite[Theorem 6.3]{BO}, except there
  the argument is for moderate growth functions rather than uniform moderate growth;
  they state  on {\em op. cit.} p. 73 that the same proof remains valid in the uniform moderate growth setting.
Also, Cassleman proves \cite[Theorem 4.4]{CasselmanHodge}, for the trivial $K$-type, that $\Cas$ 
is surjective on spaces of Eisenstein distributions, from which a similar result can be extracted.
Since the above statement is in a sense the crux of the argument, 
and neither reference gives it in precisely this form,    
  we have given a self-contained proof in \S \ref{ourproof}.   Our
  proof follows a slightly different strategy and is perhaps of independent interest.

\subsection{Proof of Theorem \ref{mainthm}: reduction to $\gK$ extensions}
We begin the proof of Theorem \ref{mainthm}
 assuming Proposition \ref{casprop}.   
  This will proceed in three steps: 
\begin{itemize}
	\item[(i)] First, using a topological versions of Shapiro's lemma, we identify $H^1(\Gamma, W^*_{-\infty}) \simeq \Ext_G^1(W_\infty, \mA).$
	\item[(ii)] Next, we pass from the category of $G$-modules to that of $(\mathfrak g,K)$-modules and produce an  isomorphism $\Ext_G^1(W_\infty, \mA)\simeq \Ext_{\gK}^1(W, \mA_{\lambda-nil})$. 
	\item[(iii)] Finally, we compute that $\Ext_{\gK}^1(W, \mA_{\lambda-nil})$ is isomorphic to the promised space of cuspforms, using the explicit  computations from \S \ref{gKmodules}.
\end{itemize}

In practice, for technical reasons, we carry out (iii)  first and then show that the map of (ii) is an isomorphism.

We begin by constructing an isomorphism  
\begin{equation} \label{orig} H^1(\Gamma, W^*_{-\infty}) \simeq \Ext_G^1(W_{\infty}, \Abig), \end{equation}
where $W_{\infty}$ is the smooth globalization of $W$.  

On the left, we have the ordinary group cohomology of the discrete group $\Gamma$ acting
on the vector space $W^*_{-\infty}$, without reference to topology.
On the right here we use a {\em topological} version of~$\Ext$
defined as follows: present $\Abig$ as a directed union $\varinjlim \Abig(R)$
of moderate growth Fr{\'e}chet~$G$-representations (see 
\ref{ss globalization})
 obtained by imposing a specific
exponent of growth~$R$ in \eqref{uniform moderate growth}.   The right hand side
is then defined to be the direct limit $\varinjlim \Ext^1_G(W_{\infty},  \Abig(R))$, where
the elements of each~$\Ext$ group are   represented by isomorphism classes of 
short exact sequences\footnote{Here, the notion
  of exact sequence is the usual one, with no reference to topology: the first map is injective, and its image is the kernel of the second, surjective map.}  $\Abig(R) \rightarrow ? \rightarrow W_{\infty}$,
with~$?$ a moderate growth Fr{\'e}chet $G$-representation
and the maps are required to be continuous.

  The statement \eqref{orig} is  then a version of Shapiro's lemma in group cohomology.  Let us spell out
  the relationship:  for $G_1 \leq G_2$ of finite index,
and $W$ a finite-dimensional $G_1$-representation, 
Shapiro's lemma supplies
 an  isomorphism  
\begin{equation} \label{orig.shapiro} H^1(G_1, W^*) \stackrel{(i)}{\simeq} H^1(G_2, \mathrm{I}^{G_2}_{G_1} W^*)  \stackrel{(ii)}{\simeq} 
 H^1(G_2, ( \mathrm{I}^{G_2}_{G_1}  \C) \otimes W^*)  \stackrel{(iii)}{\simeq} \mathrm{Ext}^1_{G_2}(W, \mathrm{I}^{G_2}_{G_1} \C).\end{equation} 
Here  $I^{G_2}_{G_1}$ is the induction from $G_1$ to $G_2$, and we used in (i) Shapiro's lemma in its standard form \cite[III.5-6]{BrownCohomology}; at step (ii) the projection formula
$\mathrm{I}^{G_2}_{G_1} W^* \simeq \mathrm{I}^{G_2}_{G_1} \C \otimes W^*$,
and at step (iii) the relationship
between group cohomology and $\Ext$-groups which results by deriving the relationship
$\Hom_{G_2}(W, V) = (V \otimes W^* )^{G_2}$. 

Our statement \eqref{orig} is precisely analogous to the isomorphism of \eqref{orig.shapiro} with $\Gamma$ playing the role of $G_1$,
$G$ playing the role of $G_2$, and with topology inserted. It can be proven simply 
by writing down the explicit maps from far left to far right in \eqref{orig.shapiro}
and checking that they respect  topology and are inverse to one another.  There
is only one point that is not formal: to prove that
  there is a well-defined map from left to right, one needs to check
  that the extension of $G$-representations arising in (iii) by ``inflating'' a cocycle $j: \Gamma \rightarrow W^*_{-\infty}$ indeed has moderate growth. This requires
growth bounds on $j$, and these follow  simply
by writing out $j(\gamma)$, for arbitrary $\gamma \in \Gamma$,
in terms of the values of $j$ on a generating set using the cocycle relation.
We observe that some ``automatic continuity'' argument of this nature is needed, because,  in the statement
of \eqref{orig},  
  the topology of $W$ figures on the right hand side but not on the left. 
 
As the next step towards Theorem \ref{mainthm}, observe that there is a natural map 
   \begin{equation}  \label{extdef2} 
 \Ext_G^1(W_{\infty},\Abig) {\longrightarrow} \Ext^1_{\gK}(W, \Asmall),
\end{equation} 
where the right-hand side is taken in the category of $\gK$-modules.

This ``natural map'' associates to an extension $\Abig \rightarrow E \rightarrow W_\infty$
the underlying sequence of $K$-finite vectors in each of $\Abig, E, W_{\infty}$ which are  annihilated
by some power of $\Cas-\lambda$ (in the case of $W_\infty$, this space is exactly $W$, on which $C-\lambda$ acts trivially).
  That the resulting sequence remains exact follows from surjectivity of  $\Cas-\lambda$ in the form of 
Proposition \ref{casprop}. We explicate this: one must
verify that each element $w \in W$ 
has a preimage in $E_K$ killed by some power of $(\Cas-\lambda)$.
First, take an arbitrary preimage of $w$ in $E$ and average it over $K$ to produce a preimage $e \in E_K$. Then $(\Cas-\lambda)e$ belongs to the image of $\Abig_K$,
and can be written as $f_1+f_2$ with $f_1 \in \Cusp(\lambda) \subset \ker (\Cas-\lambda)$
and $f_2 \in \Cusp(\lambda)^{\perp}$. Choose, by Proposition \ref{casprop},
a class $e' \in \mA_K$ with $(\Cas-\lambda) e'= f_2$; then $e-e'$ still lifts $w$
and is now killed by $(\Cas-\lambda)$.

 We will show  in \S \ref{eval}  that the right hand side
of \eqref{extdef2} has dimension 
$$ m=  \mbox{ the multiplicity of $W^{cl}$ in $\Cusp(\lambda)$},$$
and in \S \ref{iso} that \eqref{extdef2} is actualy an isomorphism. 
This will conclude the proof,  remembering that 
  the left-hand side is identified, by means of \eqref{orig}, with $H^1(\Gamma, W^*_{-\infty})$.

\subsection{Evaluation of the $\gK$-ext}  \label{eval} 
 We compute the $\gK$-extension on the right-hand side of \eqref{eq cohoequalsgk}. 
 The space~$\Cusp(\lambda)$ decomposes as a finite 
  direct sum of irreducible~$\gK$-modules; this  follows from the similar~$L^2$ statement, see \cite[\S 8]{Borel97}.  
  Since each of these irreducible summands has infinitesimal character $\lambda$, their underlying $(\mathfrak{g},K)$-modules can belong to at most three isomorphism classes, as described in \S \ref{gKmodules};
among these is $W^{cl}$, the ``complementary $\gK$-module to $W$''
from Definition \ref{def complementary}.
Accordingly we decompose
\begin{equation}\label{eq Aut form decomposition} \Asmall = \Cusp(\lambda)^{\perp} \oplus (W^{cl})^{m} \oplus \bigoplus_{\stackrel{V \subset \Cusp(\lambda)}{V \not \simeq W^{cl}}} V, \end{equation} where 
 $\Cusp(\lambda)^{\perp}$
is the orthogonal complement of $\Cusp(\lambda)$ within $\Asmall$, and 
 $m$ is the multiplicity of $W^{cl}$ in $\Cusp(\lambda)$.

The splitting \eqref{eq Aut form decomposition} induces a similar direct sum splitting of $\Ext^1_{\gK}(W, \Asmall)$. 
But Proposition \ref{casprop} implies that
 $\Cas-\lambda$ defines a surjection from $\Cusp(\lambda)^{\perp}$ to itself,
 and so, applying Proposition \ref{comp}, 
$$ \Ext^1_{\gK}(W, \Cusp(\lambda)^{\perp}) = 0.$$
The  remaining two summands evaluate
via the second part of Proposition \ref{comp} to $\C^m$ and $0$ respectively. This yields 
$$\Ext^1_{\gK}(W, \Asmall) \simeq \C^m.$$
This concludes the proof that the right hand side of \eqref{extdef2} has dimension $m$. 

 \subsection{Comparison of topology and $\gK$ extensions} \label{iso} 
To conclude, we must show that the map of \eqref{extdef2} is in fact an {\em isomorphism}. 

Injectivity of the   resulting map on $\Ext$-groups
follows using the Casselman-Wallach theory of canonical globalization;
the result is formulated in exactly the form we need in \cite[Prop 11.2]{BK14}, namely, 
 a splitting at the level of $\gK$-modules automatically gives rise to a continuous splitting. \footnote{
 We sketch the idea of the argument to emphasize that what we use is relatively elementary: Given an abstract $\gK$-module
 splitting $\varphi: W \rightarrow \Abig$ we must show that it does not distort norms too far. 
Fixing a generating set $w_1, \dots, w_r$ for $W$, one shows using 
bounds similar to \eqref{mcbound} that
any vector $w \in W$ can be written as $\sum h_i \star w_i$ where $h_i$ are
bi-$K$-finite functions on $G$ and the norms of the $h_i$ are not too large in terms of the norms of $w$.
This permits one to bound the size of $\varphi(w) = \sum h_i \star \varphi(w_i)$. 
 }
 
 For surjectivity, one cannot directly apply the Casselman--Wallach theory because  $\mA$ is ``too big.''
 However, we saw in \S \ref{eval} that  the right-hand side of \eqref{extdef2} 
actually is generated by the image of $\Ext^1_{\gK}(W, \Cusp(\lambda))$.
The space $\Cusp(\lambda)$ has finite length, and 
then the results of \cite{Ca89} (in the form of the equivalence of categories, see \cite[Corollary, \S 11.6.8]{Wa92}) implies 
that each such extension of $\gK$-modules arises from an extension of smooth globalizations, which readily implies the desired surjectivity.

\begin{remark} \label{higherFrobenius}
 Together, the isomorphisms  \eqref{orig} and \eqref{extdef2} give an isomorphism \begin{equation} \label{eq cohoequalsgk} H^1(\Gamma, W^*_{-\infty}) \simeq 
 \Ext^1_{\gK}(W, \Asmall).\end{equation}
  The analogous statement  in {\em all} cohomological degrees
 has been proved for cocompact $\Gamma$ by Bunke-Olbrich \cite[Theorem 1.4]{BO97}.  
    However,  our argument does not generalize to this case, at least in any routine way:
    it is not immediately clear to us how to generalize the cocycle growth
  argument to $H^i$ for $i > 1$. 
  \end{remark}
  
  \begin{remark}\label{FrobRepProof}
 For completeness, because  we made use of it earlier,  we outline the argument 
 for the much easier degree $0$ version of \eqref{eq cohoequalsgk}, i.e. 
 ``Frobenius reciprocity'':
\begin{equation} \label{FR0} H^0(\Gamma, W^*_{-\infty}) \simeq \Hom_{\gK}(W, \Asmall) ,\end{equation} 
 where we now allow $W$ to be any finite length $\gK$-module. 
 
 The standard construction of Frobenius reciprocity
identifies $H^0(\Gamma, W^*_{-\infty})$ with continuous $G$-homomorphisms
from $W_\infty$ to $\Abig$; then, restriction to $K$-finite vectors defines
a class in $\Hom_{\gK}(W_K, \Abig) \simeq \Hom_{\gK}(W_K, \Asmall)$.
This restriction
map is an isomorphism by the Casselman--Wallach theory \cite[Theorem, \S 11.6.7]{Wa92},
taking the target space to be the subspace
of $\Abig$ comprising functions which are   (i)
by  killed by a fixed large power of $(\Cas-\lambda)$ and (ii)
have finite norm \eqref{uniform moderate growth} 
for all $X$ and for some fixed large $R$.  This proves \eqref{FR0}. 

Now \eqref{FR0} implies Lemma \ref{FrobRep}:
$W$ is  is annihilated by an ideal of finite codimension in $Z(\mathfrak g)$; as such, the image of any 
$\gK$-homomorphism from $W$ to moderate growth functions automatically has image inside functions of uniform moderate growth \cite[5.6]{Borel97},
and therefore has image in $\Asmall$.

\end{remark}

\newcommand{\Discrete}{\mathrm{Discrete}}
\section{Surjectivity of Casimir on the space of automorphic forms.} \label{ourproof}
The primary analytic ingredient
in both proofs is the surjectivity of a Laplacian-type operator;
in the first proof this is used on spaces of functions both on $G$ and on $\Gamma \backslash G$,
and in the second proof it is used only on $\Gamma \backslash G$. 
We will now give a self-contained proof of the second version, Proposition \ref{casprop}. 
As noted after that proposition, this statement is essentially in the literature, but given
its importance it seemed appropriate to give a self-contained proof.

We follow here the notation of \S \ref{section our proof}; in particular, $\mA$
is defined using the notion of {\em uniform} moderate growth.  
It is enough to show that every
function orthogonal to $\Cusp(\lambda)$ occurs in the image of~$\Cas-\lambda: \mA_K \rightarrow \mA_K$. 
The basic
strategy is as follows: 
\begin{itemize}
	\item[(i)] In \S \ref{ss decomposition}, we decompose elements of~$\mA_K$ into functions ``near the cusp'' and functions of rapid decay,  and 
	\item[(ii)] in \S \ref{ss surjectivity},  we construct preimages under $\Cas-\lambda$ for functions in each subspace.  
	Doing this ``near the cusp''  	amounts to solving an ODE; 
	 the construction of preimages for functions of rapid decay is  carried out 
	 via $L^2$-spectral theory. 
\end{itemize}

 Since~$\Cas-\lambda$ commutes with~$K$, it suffices to prove Proposition \ref{casprop} with $\mA_K$ replaced by its subspace
$\mA_{\zeta}$ with ~$K$-type $\zeta$.  In what follows, we will regard $\zeta$ as fixed.

\subsection{Cusps} \label{ss cusps}
 It is convenient to fix once and for all a fundamental domain for $\Gamma \backslash G$: we take 
\begin{equation} \label{Frakfdef} \mathfrak{F} = \{z \in \mathbb{H}: d(z,i) \leq d(\gamma z, i) \mbox{ for all } \gamma \in \Gamma -\{e\}\}, \end{equation} 
 which  
 describes a convex hyperbolic polygon which is (up to boundary) a fundamental domain for $\Gamma$
 acting on $\mathbb{H}$; its pullback to $G$ via $g \mapsto g \cdot i$ is a fundamental domain for
 $\Gamma \backslash G$, which will often be denoted by the same letter. 
 In particular, $\mathfrak{F}$ can be decomposed in the following way, where the sets intersect only along their boundary: 
\begin{equation} \label{Fdecomp} \mathfrak{F} = \mathfrak{F}_0 \cup \mathcal{C}_1 \cup \mathcal{C}_2 \cup \dots \cup \mathcal{C}_h\end{equation}
  with $\mathfrak{F}_0$ compact  and each $\mathcal{C}_i$ a {\em cusp}, that is to say,
 a $G$-translate of a region of the form $\{x+iy: a \leq x \leq b, y \geq Y_0\}.$ 
 In the Iwasawa cordinates $G= NAK$ of \eqref{eq Iwasawa}, the pullback of~$\mathcal{C}_i$ to $G$ therefore has the form 
 \begin{equation} \label{Siegel-domain}
\widetilde{\mathcal{C}_i} =  g_i \cdot \{n_x a_y k: a \leq x \leq b, y \geq Y_0, k \in K\}.\end{equation}
The map $\widetilde{\mathcal{C}_i} \rightarrow \Gamma \backslash G$ is injective
on the interior of $\widetilde{\mathcal{C}_i}$. We will often
identify $\widetilde{\mathcal{C}_i}$ with its image in $\Gamma \backslash G$.

\subsection{The constant term and moderate growth functions in the cusp} \label{ss constant term}

 Let $f \in \mA_\zeta$. Fix a cusp $i$; we write $\Gamma^N_i$ for $\Gamma \cap g_i N g_i^{-1}$. 
 The constant term 
 $f^N_i:   g_i N g_i^{-1} \backslash G \longrightarrow \C$ is defined by the rule
 \begin{equation}\label{eq constant term def}
 	f^{N}_i :  x \mapsto  \mbox{average value of $f(g_i n_t g_i^{-1} x)$ for $t \in \mathbf{R}$.} \end{equation}
 The function $f(g_i n_t g_i^{-1} x)$ is periodic in $t$
 and therefore the notion of its average value makes sense. Moreover,
 the above map is right $G$-equivariant. 
 A basic (and elementary) fact is that $f^N_i$ is asymptotic to $f$ inside $\widetilde{\mathcal{C}}_i$; indeed
the function $f-f^N_i$ has rapid decay in  $\widetilde{\mathcal{C}_i}$,
  as proved in  \cite[7.5]{Borel97}. 
 Here, 
we say that a function $J: \widetilde{\mathcal{C}_i} \rightarrow \C$
 has {\em rapid decay} if, for any $X_1, ...,  X_r \in \mathfrak{g}$
 and any positive integer $N$ we have
 \begin{equation} \label{eq rapid decay} \sup_{\widetilde{\mathcal{C}_i}} \|g\|^{N} \left|X_1 \dots X_r J(g)\right| < \infty. \end{equation}

Let us consider more generally functions $f$ on $G$ that are left $N$-invariant and have fixed right $K$-type $\zeta$.  
Such a function may be identified, by means of pullback by $y \mapsto a_y$, with a function $f$ on~$\R_+$. 
 The condition of  the original $N$-invariant function on on $G$ having finite norm under~$\|\cdot\|_{X,R}$ for all~$X$,
 with notation as in \eqref{uniform moderate growth}, 
is equivalent to asking that 
\begin{equation}
 \label{noddy} \left| (y \frac{d}{dy})^j f \right| <    C_j \cdot (|y|^{-1}+|y|)^R\end{equation} for all $j$. 
 That this condition is necessary is seen by applying \eqref{uniform moderate growth} 
 to $X$ a product of elements in $\mathrm{Lie}(A)$. 
 To see that it is sufficient, we 
fix $\mathfrak{U}$ belonging to the universal enveloping algebra of $\mathfrak{g}$;
 now, for any $k \in K$, we may write $\mathfrak{U} $ as a sum of terms $\sum c_i(k) (\Ad(k^{-1}) \mathfrak{U}_{N,i}) ( \Ad(k^{-1}) \mathfrak{U}_{A,i}) \mathfrak{U}_{K,i}$
 where  the terms belong to fixed bases for the universal enveloping algebra of $N,    A$ and $K$ respectively, and the coefficients $c_i(k)$ are bounded independently of $k$.
 This permits us to bound $\mathfrak{U}f(nak)$ and we see that the bound \eqref{noddy} suffices. 
  
This motivates the following definition:
Fix $Y_0 > 0$ and denote by $  \mathcal{P}_{\geq Y_0}$
the space of smooth functions on $\R$ supported in $y > Y_0$ 
satisfying \eqref{noddy} for some $R$. Because of the restriction that $y>Y_0$,
this  is equivalent to ask that 
all derivatives are   ``uniformly'' polynomially bounded, i.e. there is $R$ such that, for all $j$,  %
there exists a constant $C_j$ with
\begin{equation} \label{growthbound}  \left| \frac{d^j f}{dy^j}  \right| <  C_j (2+|y|)^{R-j}.
\end{equation}

\subsection{The subspace $\Eis_{\lambda}$ of Eisenstein series
with eigenvalue $\lambda$} \label{Eis sec}
To each cusp $\mathcal{C}_j$, we attach an Eisenstein series $E^j(s)$, 
which is an $\mA_{\zeta}$-valued meromorphic function of the complex variable $s$, 
characterized by the fact that for $\mathrm{Re}(s) \gg 1$ it equals: 
\[E^j(s, g) = \sum_{\gamma \in \Gamma_N^i \backslash \Gamma} H(g_i^{-1} \gamma  g)^s,\]
where $H$ is  
the  
 unique  function on $G$ with right $K$-type $\zeta$,
  invariant on the left by $N$, and on $A$ given by $a_y \mapsto y$. 
  
The resulting vector-valued function is holomorphic when $\mathrm{Re}(s) = 1/2$
and we denote its value at $s=1/2+it$ by $E_t^j$. In words, $E_t^j$ is 
 the unitary Eisenstein series of $K$-type $\zeta$ with parameter $t \in \R$
attached to the $j$th cusp of $\Gamma \backslash G$. 
 Finally, denoting by $\lambda_t$ the eigenvalue of $\Cas$ on $E^j_t$, let
\[ \Eis(\lambda) := \bigoplus _j \left\{\mbox{ span of all Eisenstein series~$E^j_t$, with $t \in \R$, such that $\lambda_t={\lambda}$}\right\},\] so that~$\Eis(\lambda)$  is a finite-dimensional subspace of~$\mA_{\zeta}$ annihilated by $\Cas-{\lambda}$. 
However, if the quadratic function $t \mapsto \lambda_t-{\lambda}$ happens to have a double zero,
we include in the above space the derivative $\frac{d}{dt} E^j_t$, for this
is also annihilated by $\Cas-{\lambda}$. 
 The Casimir eigenvalue of~$E^j(s,g)$ is quadratic in~$s$ and therefore the dimension of $\Eis(\lambda)$ is at most
 twice the number of cusps.

\subsection{Decomposition of $\mA_\zeta$} \label{ss decomposition} 

Consider the subspace of $\mA_{\zeta}$ consisting of  $L^2$-eigenfunctions of the Casimir with eigenvalue $\lambda$; call 
 this $\mathrm{Discrete}(\lambda)$.   
 
 \begin{lemma} \label{cutoffLemma}
	Let $\widetilde{\mathcal C}_i$ be the cusps for a fundamental domain for the action of $\Gamma$ on $G$ as in \eqref{Fdecomp}.  Then every $f \in \mA_{\zeta}$, perpendicular to $\Cusp(\lambda)$, can be written as the sum \begin{equation} \label{eq decomposition}f = f_s + \sum_{i} f_{c_i}\end{equation}  where: \begin{itemize}
		\item[(i)] The function $f_s$ is smooth, has rapid decay at all the cusps, and is perpendicular to $\Eis(\lambda) \oplus \mathrm{Discrete}(\lambda)$.  
		\item[(ii)] Each $f_{c_i}$ is supported in the cusp $\widetilde {\mathcal{C}}_i$ and,
		with reference to the identification~\eqref{Siegel-domain}: \[ \widetilde{ \mathcal{C}_i }=  g_i \cdot \{n_x a_y k: a \leq x \leq b, y \geq Y_0, k \in K\}. \] 
		has the form \begin{equation} \label{nak function} n_xa_yk \mapsto P(y) \zeta(k), \end{equation} where  $P$  
		belongs to the space $\mathcal{P}_{\geq Y_0}$ described   after  
		 \eqref{growthbound}.
	\end{itemize} 
\end{lemma}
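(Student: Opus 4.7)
The plan is to build each $f_{c_i}$ from the cutoff constant term of $f$ at the $i$th cusp, and then adjust within the admissible class to enforce perpendicularity to the finite-dimensional complement of $\Cusp(\lambda)$ inside $\Eis(\lambda) \oplus \Discrete(\lambda)$. For each $i$, pull $f^N_i$ back by $g_i$; being left-$N$-invariant and of right $K$-type $\zeta$, it takes the form $(n_x a_y k) \mapsto F_i(y)\zeta(k)$ for a scalar function $F_i$ on $\R_+$. Applying monomials in $H \in \fg$ to $f$ and using uniform moderate growth \eqref{uniform moderate growth} for the resulting functions shows that $F_i$ satisfies derivative bounds of the shape \eqref{noddy} on $\R_+$. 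Fix a cutoff $\chi \in C^\infty(\R_+)$ with $\chi = 0$ near $Y_0$ and $\chi = 1$ for $y \geq Y_0 + 1$, and let $\widetilde f_{c_i}$ be $\chi(y)F_i(y)\zeta(k)$ transported to the Siegel set \eqref{Siegel-domain} and extended by zero; this is of the form \eqref{nak function}. By the standard fact \cite[7.5]{Borel97} that $f - f^N_i$ has rapid decay on $\widetilde{\mathcal C}_i$, the remainder $\widetilde f_s := f - \sum_i \widetilde f_{c_i}$ has rapid decay at every cusp.

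Since each $\widetilde f_{c_i}$ is left-invariant under $g_i N g_i^{-1}$ on its support, Fubini combined with cuspidality gives $\langle \widetilde f_{c_i},\phi\rangle = 0$ for every $\phi \in \Cusp(\lambda)$; combined with the hypothesis $f \perp \Cusp(\lambda)$, this yields $\widetilde f_s \perp \Cusp(\lambda)$. Let $V' := \Eis(\lambda) \oplus (\Discrete(\lambda) \ominus \Cusp(\lambda))$; this is finite-dimensional, the second summand being the residual spectrum. I plan to replace each $\widetilde f_{c_i}$ by $\widetilde f_{c_i} + R_i(y)\zeta(k)\mathbf{1}_{\widetilde{\mathcal C}_i}$ with $R_i$ a smooth rapid-decay function supported in $y > Y_0$; this preserves the form \eqref{nak function}, keeps both rapid decay and $\Cusp(\lambda)$-perpendicularity of the modified $f_s$ (the latter by the same $N$-invariance argument), and reduces the remaining condition $f_s \perp V'$ to a system of $\dim V'$ linear equations in the $R_i$.

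The chief obstacle is showing that this linear system is solvable, i.e.\ that the pairing map $(R_i)_i \mapsto \bigl(v \mapsto \sum_i c_i \int_{Y_0}^\infty R_i(y)\overline{G_{v,i}(y)}\,\frac{dy}{y^2}\bigr)$ from tuples of rapid-decay functions to $(V')^*$ is surjective, where $G_{v,i}$ records the scalar part of the constant term of $v \in V'$ at cusp $i$ and $c_i$ absorbs the $x$- and $K$-integrations. For this I plan to use two facts: first, the constant-term map $V' \to \bigoplus_i C^\infty(Y_0,\infty)$ sending $v \mapsto (G_{v,i})_i$ is injective, because any element of its kernel would be cuspidal and $V'$ contains no cusp form; and second, its image consists of linear combinations of power functions $y^s$ (and, in the degenerate case accounted for in \S\ref{Eis sec}, of $y^s\log y$) with distinct exponents, which are linearly independent and against which rapid-decay test functions pair non-degenerately on $(Y_0,\infty)$. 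Together these give surjectivity of the pairing map, and setting $f_{c_i} := \widetilde f_{c_i} + R_i\zeta\,\mathbf{1}_{\widetilde{\mathcal C}_i}$ then produces the required decomposition.
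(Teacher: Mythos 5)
Your proposal is correct and follows essentially the same route as the paper: cut off the constant term at each cusp to get the rapid-decay remainder, observe that the cusp pieces are automatically orthogonal to cusp forms by $N$-invariance, and then solve a finite-dimensional linear system for correction terms supported in the cusps, with solvability reduced to injectivity of the constant-term map on the complement of $\Cusp(\lambda)$ in $\Eis(\lambda)\oplus\Discrete(\lambda)$. The only (immaterial) difference is that the paper takes its corrections $\psi_i$ supported in $(Y_0,2Y_0)$ and invokes real-analyticity of constant terms, whereas you allow rapid-decay corrections on all of $(Y_0,\infty)$ and use the explicit description of the constant terms as combinations of powers $y^s$ (and $y^s\log y$); both give the needed non-degeneracy.
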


  Observe that, although $f$ is only assumed orthogonal to cusp forms, we arrange that $f_s$ is orthogonal also to $\Eis(\lambda)$ and all of $\Discrete(\lambda)$.
  This is possible because there is a lot of freedom in the decomposition \eqref{eq decomposition}.  It will be very convenient later.
  
 \proof 
This is a straightforward cut-off process; the only delicacy is to ensure that~$f_s$ is in fact perpendicular to $\Eis(\lambda)$ and $\Discrete(\lambda)$. 
We start from $f^{N}_i$, the constant term along  the $i$th cusp as defined in \eqref{eq constant term def}. 
Take $\varphi_i, \psi_i$ smooth functions on $\mathbf{R}_+$  where: 
\begin{itemize}
	\item $\varphi_i=0$  for $y<Y_0$ and $\varphi_i = 1$ for $y > 2Y_0$. 
	\item $\psi_i$ is supported in $(Y_0, 2Y_0)$. 
\end{itemize}
We consider both $\varphi_i$ and $\psi_i$ as functions on $\widetilde{\mathcal{C}}_i$ 
described by the rules
 $g_i n_x a_y k \mapsto \varphi_i(y) $ and  $g_i n_x a_y k \mapsto \psi_i(y) \zeta(k)$ respectively. 
 Now put $f_s = f - \sum_{i} ( \varphi_i f^N_i+\psi_i)$  so that 
\begin{equation} \label{fg} f =  f_s + \sum \underbrace{  \left( \varphi_i f^N_i + \psi_i \right)}_{f_{c_i}}.  \end{equation}
We will show that, for suitable choice of $\psi_i$, 
\eqref{fg} is the desired splitting of $f$. 
All the properties except perpendicularity to $\Discrete(\lambda) \oplus \Eis(\lambda)$ follow from general properties of the constant term discussed in \S \ref{ss constant term}.   In particular, the uniform bound on 
the functions $P$ associated -- as in \eqref{nak function} -- to the various $f_{c_i}$ follow from the condition that $f$ has uniform moderate growth.

Observe that $\varphi_i f^N_i$  and $\psi_i$ are both perpendicular to
all cuspidal functions and in particular to $\Cusp(\lambda)$,
because they both arise from functions on $g_i N g_i^{-1} \cap \Gamma \backslash G$
which are  left invariant by $g_i N g_i^{-1}$. 
Therefore
$f_s$ is also perpendicular to $\Cusp(\lambda)$. 

It remains to  choose  $\psi_i$ in such a way  that $f_s$ is indeed perpendicular to 
the orthogonal complement of $\Cusp(\lambda)$ inside $\Discrete(\lambda) \oplus \Eis(\lambda)$;
call this space $\widetilde{\Eis}(\lambda)$, as it is (potentially) a finite-dimensional enlargement of $\Eis(\lambda)$. 
 To do this, for each $\mathcal{E} \in \widetilde{\Eis}(\lambda)$ we  should have
\[  \left\langle \sum_{i} f - \varphi_i f^N_i, \mathcal{E} \right \rangle =  
\sum_{i} \left\langle \psi_i, \mathcal{E}^N_i \right \rangle_{\widetilde{\mathcal{C}_i}}.\]
The right-hand side can be considered as a linear mapping from the vector space
of possible $\psi_i$ to the finite-dimensional dual $\widetilde{\Eis}(\lambda)^*$ of the vector space $\widetilde{\Eis}(\lambda)$. 
It is enough to show this mapping is surjective, and for this it is enough to show
that its dual is injective. But the dual map is identified
with the constant term: 
\[\widetilde{\Eis}(\lambda) \rightarrow \bigoplus_{i} C^{\infty}(T_i, 2T_i), \quad  \mathcal{E} \mapsto (\mathcal{E}^{N}_i)(g_i a_y)\]
and this is injective: if $\mathcal{E}^{N_i}$
vanished in $(T_i, 2T_i)$ then it  -- being real-analytic -- vanishes identically;
  if this is so for all $i$, then $\mathcal E$ would be a cusp form, contradicting the definition of $\widetilde{\Eis}(\lambda)$. 
 \qed

\subsection{Surjectivity of $\Cas - \lambda$} \label{ss surjectivity}

We now show surjectivity of $\Cas-\lambda$ on each of the two pieces of $\mA_\zeta$
corresponding to the decomposition of Lemma   \ref{cutoffLemma}. 

\subsubsection{Surjectivity on the cusp}

\begin{lemma} \label{variation} 
The operator~$\Cas-\lambda$ is surjective on the space of functions on $G$ 
	which:
	\begin{itemize} \item are left $N$-invariant and have fixed right $K$-type $\zeta$, and
	\item lie in the space  $\mathcal{P}_{\geq Y_0}$  described before \eqref{growthbound} when pulled back to $\mathbf{R}_+$ by means of $y \mapsto a_y$.  
	\end{itemize}
 
	\end{lemma}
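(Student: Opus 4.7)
The plan is to identify $\Cas - \lambda$, acting on our function space, with a second-order ODE of a very particular form---a polynomial in the scale-invariant operator $D := y \frac{d}{dy}$---and then invert the resulting first-order factors explicitly.

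First I would identify the space of left-$N$-invariant, right-$K$-type $\zeta$ functions on $G$ with $C^\infty(\R_+)$ via $f \leftrightarrow F(y) := f(a_y)$. Since $\Cas$ lies in the center of $U(\fg)$, it preserves this space; moreover it commutes with left-translation by $A$, which acts on $F$ by scaling $F(y) \mapsto F(y_0^{-1} y)$. Any differential operator of order at most two that commutes with scaling must be a polynomial in $D$ (equivalently, in $\log y$ coordinates it has constant coefficients), and the coefficients of this quadratic polynomial $\mu$ are pinned down by the fact that $\Cas$ acts on $y^s$---which represents a $K$-type-$\zeta$ vector in the principal series parameterized by $s$---as a scalar depending quadratically on $s$, as in \eqref{Raising lowering H}. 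Thus $\Cas$ on $F$ is $\mu(D)$, and $\Cas - \lambda = \mu(D) - \lambda$ factors as $c \cdot (D - s_1)(D - s_2)$, where $s_1, s_2$ are the roots of $\mu(s) = \lambda$.

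Next I would show that each factor $D - s$ is surjective on $\mathcal{P}_{\geq Y_0}$ via the explicit formula
\[ F(y) = y^s \int_{Y_0}^y t^{-s-1} H(t)\, dt \quad (y > Y_0), \qquad F(y) = 0 \quad (y \leq Y_0). \]
Since $H \in \mathcal{P}_{\geq Y_0}$ vanishes identically on $(-\infty, Y_0]$, it vanishes to infinite order at $Y_0$, so $F$ is smooth on $\R$ and supported in $[Y_0, \infty)$; a direct differentiation gives $(D - s)F = H$. To verify $F \in \mathcal{P}_{\geq Y_0}$, an elementary bound on the integral gives $|F(y)| \leq C(y + y^{-1})^{R'}$ for a suitable $R'$, and iterating $(D - s)F = H$ yields $D^j F = s^j F + \sum_{k=0}^{j-1} s^{j-1-k} D^k H$, which bounds every $D^j F$ uniformly in terms of the analogous bounds for $F$ and the $D^k H$. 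Composing the inverses for $D - s_1$ and $D - s_2$ then produces the required preimage under $\Cas - \lambda$; the degenerate case $s_1 = s_2$ is handled by a logarithm-flavored variant of the same formula.

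The main subtlety lies in the first step---one must check that, after imposing left-$N$-invariance and the right $K$-type condition, the resulting operator on $F(y)$ is actually of order $\leq 2$ in the single variable $y$ (so that the commutation-with-scaling argument really forces it into the polynomial-in-$D$ form rather than just an operator commuting with $D$). The analytic core of the argument---namely the construction and estimation of the variation-of-parameters solution---is then quite routine once one works systematically with $D$ rather than $\partial_y$, since the defining seminorms of $\mathcal{P}_{\geq Y_0}$ via \eqref{noddy} are tailored precisely to be well-behaved under $D$.
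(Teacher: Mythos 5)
Your proof is correct and is essentially the paper's argument: both reduce $\Cas-\lambda$ on this function space to the explicit Euler operator $2y^2\frac{d^2}{dy^2}-\lambda = 2(D-p_1)(D-p_2)$ on $\mathbf{R}_+$ and invert it by integrating from $Y_0$, your composition of first-order inverses being the same computation as the paper's variation of parameters (and, as you note, it handles the double-root case with no extra work). One caution on pinning down $\mu(D)$ via the principal series: with the paper's convention for the character of $A$ in \eqref{Hsigmaxidef}, the function $F(a_y)=y^s$ corresponds to the parameter $\xi = 2s-1$, not $\xi=s-1$, so the eigenvalue $\frac{\xi^2-1}{2}$ becomes $\mu(s)=2s(s-1)$; the paper sidesteps this normalization hazard (and your ``order $\leq 2$'' subtlety) by computing directly from $\Cas = \frac{H^2}{2}-H+2XY$ and observing that $XY$, having its $\mathfrak{n}$-factor leftmost, annihilates left-$N$-invariant functions along $NA$.
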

\begin{proof}
Let $f: \mathbf{R}_+ \rightarrow \C$ 
 be extended to  a function $F: G \rightarrow \C$ by left $N$-invariance and with 
fixed $\kappa$-weight equal to $\zeta$, so that $F$ has the form:  \[F(na_y \exp(\theta k)) = f(y) e^{i \zeta \theta}.\]
 Observe 
that for arbitrary $X_1 \in \mathfrak{n} = \mathrm{Lie}(N)$ and $X_2, \dots, X_k \in \mathfrak{g}$ we have
\[\left( X_1 \dots X_k F  \right) \mbox{ is identically zero on $NA$.}\]
Indeed, the left-hand side is the partial derivative $\partial_{t_1} \dots \partial_{t_k}$ of $F(na e^{t_1 X_1} \dots e^{t_k X_k})$
evaluated at $t_i=0$, which vanishes since $F$ is independent of $t_1$.
From this observatoin, it follows that the action of the operator 
  $\Cas=   \frac{H^2}{2} - H + 2 XY$ on $f$ agrees with the action of 
 $ H^2/2 - H$ on $f(y)$. Since
$H$ acts on $f$ via via $2 y \frac{d}{dy}$, we get that $\Cas -\lambda$ acts as the differential operator: 
\[  2 y^2 \frac{d^2}{dy^2} - \lambda.\]
 
 We show that $\Cas-\lambda$ is surjective on
 $\mathcal{P}_{\geq Y_0}$ explicitly:
we construct $g$ with $(\Cas-\lambda) g=f$
via the method of variation of parameters. 
   
 The homogeneous solutions 
 to the equation $\left(2 y^2 \frac{d^2}{dy^2}-\lambda \right) g = 0$
 are given by~$y^{p_1}$,~$y^{p_2}$,
 where the $p_i$ are roots of $2 p(1-p) + \lambda=0$. We assume that $p_1 \neq p_2$, the $p_1 = p_2$ case is similar. 
  A solution to $(\Cas-\lambda) g=f$ can then be found by taking
 \[g= b_1(y) y^{p_1} + b_2(y) y^{p_2},\]
  where the $b_i$ satisfy 
 \[ \frac{db_i}{dy} = (-1)^i \frac{1/2}{p_1-p_2} f(y) y^{-p_i-1} .\] 
 Taking $f$ as in \eqref{growthbound}, we take
 $b_i = \pm \frac{p_i-p_2}{2} \int_{Y_0}^y f(y) y^{-p_i-1}$
 for $y > Y_0$ and $b_i(y) = 0$ for $y \leq Y_0$. By construction, if $f$ belongs to $\mathcal{P}_{\geq Y_0}$
 then so does $b_i$ and so also $g$.     \end{proof}

\subsubsection{Surjectivity on functions of rapid decay.}

\begin{prop} \label{spectral}
The image of the map~$\Cas-\lambda: \mA_\zeta \to  \mA_\zeta$ contains all functions of rapid decay  	that are orthogonal to $\Eis(\lambda)$ and $\Discrete(\lambda)$. 
\end{prop}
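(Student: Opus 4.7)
The plan is to invert $\Cas-\lambda$ via $L^2$-spectral theory and then to promote the resulting preimage from $L^2$ to $\mA_\zeta$. First, I would apply the spectral decomposition of the canonical self-adjoint extension of $\Cas$ on $L^2(\Gamma\backslash G)_\zeta$, which splits into a discrete part (cuspidal plus residual) and a continuous part built from unitary Eisenstein series. A function $f$ of rapid decay is square-integrable, and the assumption that $f\perp \Discrete(\lambda)\oplus \Eis(\lambda)$ ensures that its spectral measure has no atom at $\mu=\lambda$ and vanishes to sufficient order on the continuous spectrum near $\lambda$, via a Maass--Selberg type bound that translates rapid decay of $f$ into smoothness of its spectral coefficients. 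Consequently the formal resolvent
\[
g := \int \frac{dE_\mu(f)}{\mu - \lambda}
\]
converges in $L^2(\Gamma\backslash G)_\zeta$ and gives a weak solution to $(\Cas-\lambda)g = f$.

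Next I would upgrade $g$ to a smooth function. Because $\Cas$, restricted to functions of right $K$-type $\zeta$, is an elliptic operator on the associated line bundle over $\Gamma\backslash G/K$, elliptic regularity applied to $(\Cas-\lambda)g=f$ with $f$ smooth yields $g\in C^\infty(\Gamma\backslash G)$. Iterating the equation gives $\Cas^k g \in L^2$ for every $k\ge 0$, so $g$ lies in every $\Cas$-Sobolev space.

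The main obstacle, and where most of the work lies, is showing that $g$ has uniform moderate growth. I would handle the compact core of the fundamental domain by elliptic Sobolev embedding applied to the $\Cas$-Sobolev bounds established above, and handle each cusp $\widetilde{\mathcal{C}_i}$ by a Fourier expansion along the unipotent $N$: write $g(n_x a_y k) = \sum_{m \in \BZ} g_m(y) e^{2\pi i m x / T}\zeta(k)$. The zero mode $g_0 = g^N_i$ satisfies, by the computation of Lemma \ref{variation}, the inhomogeneous ODE
\[
(2 y^2 \tfrac{d^2}{dy^2} - \lambda) g^N_i(y) = f^N_i(y),
\]
whose right-hand side has rapid decay; variation of parameters supplies a rapidly decaying particular solution, and the only homogeneous contribution compatible with $g \in L^2$ is a multiple of $y^{p_1}$ with $p_1 < 1/2$ (where $p_1 + p_2 = 1$ are the roots of $2p(p-1)=\lambda$), which is itself of moderate growth. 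For each nonzero Fourier mode $g_m$ the reduced ODE is of modified Bessel type whose only $L^2$ solution decays exponentially in $|m|y$; the Sobolev control $\|\Cas^k g\|_{L^2} < \infty$ then gives polynomial bounds in $|m|$ and rapid decay in $y$, with similar bounds for derivatives, so that the Fourier expansion converges to a function of (in fact) rapid decay at each cusp. Combining the compact and cuspidal estimates shows $g \in \mA_\zeta$ and completes the proof.
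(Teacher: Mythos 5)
Your construction of the preimage coincides with ours: both arguments define $g$ by dividing the spectral coefficients of $f$ by the eigenvalue of $\Cas-\lambda$, and both use orthogonality to $\Discrete(\lambda)\oplus\Eis(\lambda)$ to make the resolvent integrand integrable near the real zeros of $t\mapsto\lambda_t$. One point you should make explicit there: when $t\mapsto \lambda_t$ has a \emph{double} zero at $t_0$, first-order vanishing of $\langle f,E^j_t\rangle$ does not suffice, and the needed second-order vanishing comes not from a Maass--Selberg bound but from the fact that $\Eis(\lambda)$ was defined to contain $\tfrac{d}{dt}E^j_t|_{t=t_0}$, so that your hypothesis includes orthogonality to that derivative. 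Where your argument genuinely diverges from ours is in upgrading $g$ from $L^2$ to uniform moderate growth. We do this directly on the spectral expansion, via the Weyl-law count \eqref{decay0}, the integration-by-parts decay \eqref{decay1}, the pointwise Sobolev bound \eqref{decay2} on eigenfunctions, and Maass--Selberg average bounds for the Eisenstein part, showing that the expansion of $\bar f$ and of all its derivatives converges absolutely to a function of uniform moderate growth. You instead use elliptic regularity and iteration of the equation to put $g$ in every $\Cas$-Sobolev space, handle the compact core by Sobolev embedding, and analyse each cusp by Fourier expansion along $N$: the constant term is pinned down by the explicit ODE of Lemma \ref{variation} together with square-integrability (which excludes the inadmissible homogeneous solution), and the nonzero modes by the Whittaker-type ODE with $L^2$-control supplying uniformity in $m$. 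This correctly avoids the circularity of invoking the standard ``$g-g^N$ decays rapidly'' statement, which presupposes moderate growth of $g$. Your route trades our analytic inputs (pointwise bounds on Maass forms, average bounds on Eisenstein series) for cusp-by-cusp ODE analysis close in spirit to Lemma \ref{variation}; both close the argument, yours being somewhat more self-contained on the Eisenstein side at the cost of extra bookkeeping to make the moderate-growth exponent uniform over all derivatives $X$.
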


\begin{proof}

Let $f$ be such a function.   We
fix an orthonormal basis $\{\varphi_i\}$ for 
the discrete spectrum of $\Cas-\lambda$ on $L^2(\Gamma \backslash G)_{\zeta}$,
where the subscript means that we restrict to $K$-type~$\zeta$.
For constants $\mu_j$ depending only on the width of the various cusps, we have, following e.g. \cite[\S 13]{Borel97}, 
\begin{equation} \label{fspectral} f = \sum_{i} \langle f, \varphi_i  \rangle \varphi_i +  \mu_j \sum_j \int_{t \geq 0} \langle f, E_t^j \rangle E_t^j  dt.\end{equation}
 {\em A priori} this is an equality inside $L^2$.
 Let $\lambda_i$ and $\lambda_t$ be, respectively, the eigenvalues of $\Cas-\lambda$ on $\varphi_i$ and $E_t$; 
 by the assumption on $f$, these are nonvanishing except when $\langle f, \varphi_i \rangle= 0$ or when $\langle f, E_t \rangle=0$.

Define $\bar{f} \in L^2$ by the rule  \begin{equation} \label{barf-def} \bar{f} = \sum_{\lambda_i \neq 0} \frac{ \langle  f, \varphi_i \rangle}{\lambda_i} \varphi_i +  \sum_{j} \int_{t \in \R} \frac{ \langle f, E_t^j \rangle}{\lambda_t} E_t^j dt.\end{equation} 
 
It is not hard to see that the right-hand side defines an $L^2$-function: 
 The function $\langle f, E_t^j \rangle$ is holomorphic in a neighbourhood of $t \in i\R$, as follows
 from holomorphicity of $t \mapsto E_t^j$ and absolute convergence of the integral defining $\langle f, E_t^j \rangle$. 
 Moreover, by assumption, 
this holomorphic function vanishes when $\lambda_t=0$.
 In particular the function $\langle f, E_t^j \rangle/\lambda_t$ is holomorphic, too;
 this follows from what we just said if the quadratic function $t \mapsto \lambda_t$
 has distinct zeroes, and in the case when it has a double zero $t_0$ we recall that the derivatives
 $\frac{dE_t^j}{dt}|_{t=t_0}$ also belong to $\Eis(\lambda)$. 
Therefore, the integrand in  \eqref{barf-def} is  locally integrable in $t$, and 
then its global integrability follows from \eqref{fspectral}.

We claim that  $\bar{f}$ has uniform moderate growth and $$(\Cas-\lambda) \bar{f} =f$$
as desired.

 In fact,   the summation and integrals in both \eqref{fspectral} and \eqref{barf-def} are
 absolutely convergent, uniformly on compact sets, and they define
functions of uniform moderate growth; moreover,  any derivative $X \bar{f}$
coincides with the corresponding summation inserting $X$ inside the sums and integrals.
 The proof of these claims follow from  nontrivial, but relatively  standard, estimates. We summarize these estimates, with references.
    A convenient general reference for all the analysis required is Iwaniec \cite{Iwaniec};
 he works only with the trivial $K$-type, but analytical issues
  are exactly the same if we work with a general $K$-type. 
    
    We examine the first summand of \eqref{barf-def} first. 
 Let $\lambda_i$ be the  $(\Cas-\lambda)$-eigenvalue of $\varphi_i$. 
 Then the easy upper bound in Weyl's law (cf. \cite[(7.11), Corollary 11.2]{Iwaniec}
 for the sharp Weyl law in the spherical case; the same proof applies with $K$-type) gives:  \begin{equation} \label{decay0} \# \{i : |\lambda_i| \leq T \} \leq \mathrm{const} \cdot T^2.\end{equation}
 For any $r \geq 0$ we have an estimate
\begin{equation} \label{decay1}  |\langle f, \varphi_i \rangle| \leq c_r (1+|\lambda_i|)^{-r}
\end{equation} 
 arising from 
  integration by parts and Cauchy-Schwarz (using $\|\varphi_i\|_{L^2}=1$). 
Finally, there is a constant $N$ with the following property: for any invariant differential operator $X \in \mathfrak{U}$ 
 of degree $d$, we have a bound
\begin{equation} \label{decay2} | X \varphi_i (g)|  \leq  (1+|\lambda_i|)^{d+N} \|g\|^N. \end{equation} 
This can be derived from a Sobolev estimate, again using the normalization $\|\varphi_i\|_{L^2}=1$; see
e.g \cite[(3.7)]{BRSobolev}.    These estimates suffice
 to treat the cuspidal summand of \eqref{barf-def}.

  Now we discuss the integral summand of \eqref{barf-def}.     
 To examine absolute convergence of the integral, one  reasons exactly as for cusp forms, but 
rather than pointwise estimates in $t$ one only looks at averages over $T \leq t \leq T+1$. 
In place 
of the $L^2$-normalization of $\varphi_i$ we have the following estimate
$$  \int_{T}^{T+1}  \int_{\mathrm{ht} \leq Y} |E^j_t(g)|^2  \ll T^2 + \log(Y)$$
where $\mathrm{ht} \leq Y$ means that we integrate over the complement
of the set $y \geq Y$ in each cusp. 
This bound is derived from the Maass--Selberg relations  (cf. \cite[Proposition 6.8 and (6.35) and (10.9)]{Iwaniec})
and average bounds on the scattering matrix (equation (10.13), {\em op. cit.}). 
From this, one obtains in the same way as the cuspidal case bounds on 
    $\int_{T}^{T+1}  | \langle f, E^j_t \rangle |^2$
     and $\int_{T}^{T+1} |X E^j_t|^2$
     that are of the same quality as    \eqref{decay1} and \eqref{decay2}
     and the same analysis as for the cuspidal spectrum goes through. 

 \end{proof}

 \subsection{Proof of the proposition}
 
We now prove Proposition \ref{casprop}, that is to say, that the image of $\Cas-\lambda$ is the orthogonal complement of cusp forms.   Take $f \in \mcA_{\zeta}$ and write $f = f_s + \sum f_{c_i}$ as in Lemma \ref{cutoffLemma}. By Lemma \ref{variation}
and Proposition \ref{spectral} there are functions
$g_i, g \in \mA_{\zeta}$ with
$$  (\Cas-\lambda) g_i = f_{c_i}, \quad (\Cas-\lambda) g=f_s,$$  
where, in the case of $g_i$, we use  Lemma \ref{variation} to produce a function on $\widetilde{\mathcal{C}_i}$, and then extend it by zero to get 
 an element of $\mA_{\zeta}$.  Then $g+\sum_i g_i$ is the desired preimage of~$f$ under $\Cas-\lambda$. 
\qed

\section{Interpolation and cohomology.} 

\label{InterpCoh}

 We will recall background on the Segal-Shale-Weil representation
 (see \cite{LV13} for details) necessary to explain 
  why the foregoing results imply the interpolation formula of Radchenko and Viazovska. 
We have already outlined the argument in \S \ref{outline}
and what remains is to explain in detail where the actual numbers   in \eqref{win} come from. 

\subsection{The Weil representation} \label{Weil}

 Let $L^2(\R)_+$ be the Hilbert space of even square integrable functions on $\R$, and
let $\mS$ be the subspace of even  Schwartz functions, i.e. even 
smooth functions $f$ such that 
\begin{equation} \label{Sdef} \sup_{x \in \BR}\left | x^n \frac{d^m}{d x^m} f(x) \right | <\infty  \end{equation}  for any pair $(m,n)$ of non-negative integers. 
Let $G$ be the degree $2$ cover of $SL_2(\BR)$. There is a unique unitary representation of $G$ on $L^2(\R)_+$, the Weil (or oscillator) representation,
for which $\mS$ is precisely the subspace of smooth vectors
and such that the action of
 $\fg$ on $\mS$ is given by: \[ X \cdot \phi(x)= -i\pi x^2\phi(x), \qquad Y \cdot \phi(x) = \frac{-i}{4 \pi}\frac{\del^2}{\del x^2}\phi(x), \qquad H \cdot \phi(x) =  \left( x\frac{d}{dx} + \frac{1}{2}\right)\phi(x). \]  
   It then follows that $\kappa = i(X-Y)$ acts by \[ \kappa \cdot \phi(x) = \left(\pi x^2 - \frac{1}{4\pi}\frac{\del^2 }{\del x^2}\right) \phi(x) \]

The normalization ensures that the action of $G$ is unitary and that the relation~$\sigma X \sigma^{-1} = Y$ is preserved, where~$\sigma: \mS \to \mS$ is the Fourier transform: \[ \sigma(\phi)(\xi) = \hat{\phi}(\xi) := \int_\BR \phi(x)e^{-2\pi i x \xi} dx. \]
Moreover,  with respect to the seminorms of   \eqref{Sdef}, 
the topological vector space $\mathcal{S}$ has the structure of a moderate growth Fr\'echet representation of $G$. 

The vector $v_{1/2} := e^{-\pi x^2}$ has $\kappa$-weight $1/2$
and Casimir eigenvalue $-3/8$.
The other $K$-finite vectors in $\mathcal{S}$
are spanned by its Lie algebra translates; they have the form~$q(x) e^{-\pi x^2}$ for $q$ an even polynomial, and have $\kappa$-weights
$\frac{1}{2}, \frac{5}{2}, \frac{9}{2}, \cdots$.

\subsection{The lattice $\Gamma$}

 If $X \in \fg$ is nilpotent, the projection map identifies~$\exp(\BR X) \subset \meta$
with the corresponding $1$-parameter subgroup of $SL_2(\BR)$. In particular,
the map~$G \rightarrow SL_2(\BR)$ splits over any one-parameter unipotent subgroup;
thus  the groups of upper and lower-triangular matrices have distinguished lifts in $\meta$.

In particular,  the elements $e = \left(\begin{smallmatrix}
	1&2\\ 0&1
\end{smallmatrix}\right)$ and $f = \left(\begin{smallmatrix}
1&0\\ 2&1
\end{smallmatrix}\right)$ defined in \eqref{efdef} have distinguished lifts $\tilde{e}$, $\tilde{f}$ to $G$. They act in the Weil representation by:
\begin{equation} \label{eq unipotent action} \tilde{e} \cdot \phi (x) = e^{-2 \pi i x^2} \phi(x), \qquad \tilde{f} \cdot \phi(x) = \sigma \tilde{e} \sigma^{-1} \phi(x). \end{equation}

Let $\Gamma \in SL_2(\BZ)$ be the subgroup freely generated by $e$ and $f$. 
 It is the subgroup of $\Gamma(2)$ whose diagonal entries are congruent to $1$ mod $4$, and is conjugate to $\Gamma_1(4)$. 

\begin{lemma}
	There is a splitting~$\Gamma \to \meta$ which extends the splitting over the two subgroups~$\langle e \rangle$ and~$\langle f \rangle$. The image of~$\Gamma$ in this splitting are precisely the elements of its preimage leaving fixed the 
	distribution~$\mathcal{Q} := \sum_{n \in \BZ} \delta_{n^2}$ -- see \ref{outline} for the definition of $\delta_n$. 
\end{lemma}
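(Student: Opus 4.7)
The plan is to deduce both assertions quickly from the freeness of $\Gamma$ on $e, f$ (noted just after \eqref{MWseq}), together with the explicit formulas \eqref{eq unipotent action} and classical Poisson summation.

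\textbf{Construction of the splitting.} Since $\Gamma$ is freely generated by $e$ and $f$, the universal property of free groups produces a unique homomorphism $s: \Gamma \to G$ sending $e \mapsto \tilde e$ and $f \mapsto \tilde f$. Composing with the projection $G \to \SL_2(\R)$ gives a homomorphism $\Gamma \to \SL_2(\R)$ that agrees with the identity on the two generators, hence is the identity. Thus $s$ is a section of $G \to \SL_2(\R)$ over $\Gamma$, and by construction restricts to the distinguished splittings over $\langle e\rangle$ and $\langle f\rangle$.

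\textbf{The image of $s$ fixes $\mathcal{Q}$.} For $\tilde e$, the dual action on $\mathcal{Q}$ is computed directly from \eqref{eq unipotent action}:
\[ (\tilde e \cdot \mathcal{Q})(\phi) \;=\; \mathcal{Q}(e^{2\pi i x^2}\phi) \;=\; \sum_{n \in \BZ} e^{2\pi i n^2}\phi(n) \;=\; \mathcal{Q}(\phi), \]
since $n^2 \in \BZ$. For $\tilde f = \sigma \tilde e \sigma^{-1}$, Poisson summation (which for even Schwartz $\phi$ yields $\mathcal{Q}(\phi) = \mathcal{Q}(\hat\phi)$, i.e.\ $\sigma$-invariance of $\mathcal{Q}$) reduces the computation to the one just carried out for $\tilde e$ applied to $\hat\phi$.

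\textbf{Nothing else in the preimage of $\Gamma$ fixes $\mathcal{Q}$.} Every element of $G$ mapping to $\gamma \in \Gamma$ is either $s(\gamma)$ or $z \cdot s(\gamma)$, where $z$ is the nontrivial element of $\ker(G \to \SL_2(\R))$. The $\kappa$-weights appearing in $\mathcal{S}$ are the half-integers $\tfrac12, \tfrac52, \tfrac92, \dots$, which certifies that $\mathcal{S}$ is a \emph{genuine} representation of the double cover; hence $z$ acts as $-1$ on $\mathcal{S}$ and dually as $-1$ on $\mathcal{S}^*$. Therefore $z \cdot \mathcal{Q} = -\mathcal{Q} \neq \mathcal{Q}$, and $s(\gamma)$ is the unique preimage of $\gamma$ that fixes $\mathcal{Q}$.

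There is no substantial obstacle here: the only inputs are freeness of $\Gamma$, the explicit Schr\"odinger model \eqref{eq unipotent action}, and Poisson summation. The single point requiring mild care is the sign of $z$'s action, which I find cleanest to read off the half-integer $K$-types — this both confirms $\mathcal{S}$ does not descend to $\SL_2(\R)$ and pins down $z$ as acting by $-1$ rather than $+1$.
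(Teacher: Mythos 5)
Your proof is correct, and its core computations are the same as the paper's: the invariance of $\mathcal{Q}$ under $\tilde e$ read off from \eqref{eq unipotent action}, the reduction of the $\tilde f$ case to Poisson summation, and the fact that the nontrivial deck transformation $z$ acts by $-1$ on $\mS$, so that at most one of the two preimages of any $\gamma$ can fix $\mathcal{Q}$. Where you diverge is in how the splitting is \emph{produced}: you invoke freeness of $\Gamma$ on $e,f$ and the universal property of free groups, and then use $\mathcal{Q}$-invariance only to characterize the image. The paper instead considers the subgroup $\tilde\Gamma = \langle \tilde e, \tilde f\rangle$ of $G$ and observes that the surjection $\tilde\Gamma \to \Gamma$ has kernel contained in $\{1,z\}$; since $\tilde\Gamma$ fixes $\mathcal{Q}$ while $z$ sends it to $-\mathcal{Q}$, the kernel is trivial and $\tilde\Gamma \to \Gamma$ is an isomorphism, whose inverse is the splitting. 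Thus the single sign computation does double duty in the paper, giving both existence and the characterization without any appeal to freeness; your version cleanly separates the two assertions (and would apply verbatim to any subgroup known to be free on its unipotent generators). Your derivation of $z = -1$ from the half-integral $K$-weights $\tfrac12, \tfrac52, \dots$ is a correct and welcome explicit justification of a point the paper states without proof.
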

\begin{proof}
	The lift~$\tilde{e}$ of $e$ to $G$ fixes~$\mathcal{Q}$. By Poisson summation, so does the lift~$\tilde{f}$ of~$f$. The group $\tilde{\Gamma}$ generated by $\tilde{e}$ and $\tilde{f}$ surjects onto $\Gamma$ with kernel of size at most two. But $\tilde{\Gamma}$ fixes $\mathcal{Q}$, and the two lifts of any $g \in SL_2(\BR)$ to $\meta$ act on $\mS$ by different signs, so the map $\tilde{\Gamma} \to \Gamma$ is injective. 
\end{proof}

 \subsection{Conclusion of the proof} \label{ss End of the proof}

We now fill in the deduction, already sketched in the introduction, of the Interpolation Theorem \ref{interpolation}
from Theorem \ref{mainthm}. 

We first handle a detail of topology from the discussion of \S \ref{outline}, namely, the equivalence
between the interpolation statement and its ``dual'' form. 
For a Fr\'echet space $F$ we denote its continuous dual  by $F^*$;  we regard it as an abstract vector space without topology.  
Then, for $\eta: E \to F$ a continuous map of Fr\'echet spaces,
\begin{equation} \label{Frechetdual} \mbox{if $\eta^*: F^* \to E^*$ is bijective, then $\eta$ is 
 	a homeomorphism.}
	\end{equation} 
Indeed, following \cite[Theorem 37.2]{Treves}, 
a continuous homomorphism~$\eta: E \rightarrow F$
 of Fr\'echet spaces is surjective if
 $\eta^*$ is injective and its image is weakly closed. 
 Applying this in the situation of \eqref{Frechetdual}, 
 we see at least that $\eta$ is surjective. It is injective 
  because
 the image of~$\eta^*$ is orthogonal to the kernel of~$\eta$, and then we apply the open mapping theorem to see that it is a homeomorphism. 

To verify the equivalence, asserted in \S \ref{outline}, 
between Theorem \ref{interpolation} and its dual version, 
 we apply  
\eqref{Frechetdual} to the map
 $\Psi$ of Theorem \ref{interpolation}, with codomain
 the closed subspace of $\mathfrak{s} \oplus \mathfrak{s}$ 
  defined by $\sum_{n \in \Z} \phi(n) = \sum_{n \in \Z} \widehat{\phi}(n)$. 

The other point that was not proved in \S \ref{outline} was \eqref{win},  the actual evaluation
of $H^0$ and $H^1$ for the dual of the oscillator representation, namely
\begin{equation} \label{win2} \dim H^0(\Gamma, \mS^*)  =1, \quad \dim H^1(\Gamma, \mS^*) = 0.\end{equation} 
Now, $\mS^*$ is precisely the distribution globalization of the dual of $\mS_K$, i.e.
it is the~$W_{-\infty}^*$ of the statement of Theorem~\ref{mainthm} if we take
$W$ to be $\mS_K$.  
 Therefore Theorem~\ref{mainthm} reduces us to 
 showing that the multiplicity of $\mS_K$ (respectively $\mS_K^{cl}$) in the space of automorphic forms (respectively cusp forms) for $\Gamma$
equals $1$ (respectively $0$).

From \ref{Weil}, the $K$-finite vectors $\mS_K$
are a realization of the $\gK$-module of lowest weight $1/2$, whose complementary representation
$(\mS_K)^{cl}$ is the~$\gK$-module of highest weight~$-3/2$. 
In general, a homomorphism from a lowest weight~$\gK$-module to any~$\gK$-module~$W$
is uniquely specified by the image of the lowest weight vector, which can be an arbitrary element of $W$
killed by $m$; and the dual statement about highest weight modules is also valid. 

It follows that $\gK$-homomorphisms
from~$\mS_K$ (respectively~$\mS_K^{cl}$)
to the space~$\Abig$ of automorphic forms correspond exactly to holomorphic forms of weight $1/2$ (respectively, antiholomorphic forms of weight $-3/2$);
the conditions of being killed by $m$ or $p$ precisely translate to being holomorphic or antiholomorphic.  The desired
conclusion \eqref{win2} now follows from: 

\begin{lemma} \label{lemma spaces of mf}
\begin{itemize}
\item[(a)]
The space of holomorphic forms for $\Gamma$ of weight $1/2$ is one-dimensional,
and the space of cuspidal holomorphic forms of this weight is trivial.
\item[(b)]
The space of {\em cuspidal}  holomorphic forms for $\Gamma$ of weight $3/2$  is trivial;
therefore, the space of cuspidal antiholomorphic forms for $\Gamma$ of weight $-3/2$ is also trivial. 
\end{itemize}

\end{lemma}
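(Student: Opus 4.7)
The plan is to identify $\Gamma$ with the classical congruence subgroup $\Gamma_1(4)$ via the conjugation stated in the excerpt, and invoke the Serre--Stark theorem together with a dimension formula for weight $3/2$ cusp forms. Conjugation is a biholomorphism of the upper half-plane that permutes cusps, and it carries the metaplectic splitting of $\Gamma$ (fixed by the requirement that $e$ and $f$ lift to the distinguished splittings of their unipotent one-parameter subgroups) to the standard theta multiplier system on $\Gamma_1(4)$. Hence the spaces of holomorphic (resp. cuspidal holomorphic) modular forms of half-integral weight for $\Gamma$ coincide with the classical ones for $\Gamma_1(4)$.

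For (a), I would invoke the theorem of Serre--Stark, which states that every weight $1/2$ holomorphic modular form (for any character of any $\Gamma_0(4N)$) is a linear combination of unary theta series $\sum_n \psi(n) q^{tn^2/2}$. Specializing to $\Gamma_1(4)$, equivalently to $\Gamma_0(4)$ with both possible characters on $(\Z/4)^\times$, only the Jacobi theta function $\theta(\tau) = \sum_{n \in \Z} q^{n^2/2}$ survives, so $\dim M_{1/2}(\Gamma_1(4)) = 1$. Since $\theta$ has value $1$ at the cusp $i\infty$, it is not cuspidal, and the cuspidal subspace vanishes.

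For (b), I would show $\dim S_{3/2}(\Gamma_1(4)) = 0$ via Shimura's correspondence: after quotienting by the subspace spanned by unary theta series (which in any case are not cuspidal), Shimura's lift embeds $S_{3/2}$ of level $4$ into $S_2(\Gamma_0(2))$. The latter vanishes because $X_0(2)$ has genus zero; thus $S_{3/2}(\Gamma_1(4)) = 0$. The antiholomorphic statement then follows since complex conjugation $f \mapsto \overline{f}$ is a conjugate-linear bijection between holomorphic cusp forms of weight $k$ and antiholomorphic cusp forms of weight $-k$, preserving cuspidality.

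The main technical point, rather than obstacle, is bookkeeping: verifying that the half-integral weight multiplier system transported to $\Gamma_1(4)$ by the conjugation is precisely the classical theta multiplier, so that Serre--Stark and Shimura apply without modification. This reduces to comparing splittings over unipotent subgroups, which are unique by the remarks preceding \eqref{eq unipotent action}. As an alternative to Shimura's correspondence in (b), one could apply the Cohen--Oesterl\'e dimension formula directly to obtain $\dim S_{3/2}(\Gamma_0(4),\chi) = 0$ for both characters $\chi$ of $(\Z/4)^\times$.
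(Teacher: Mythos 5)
Your proposal is correct. Part (a) is essentially identical to the paper's argument: conjugate $\Gamma$ to $\Gamma_1(4)$ and invoke Serre--Stark to see that $M_{1/2}(\Gamma_1(4))$ is spanned by $\theta$, which is visibly non-cuspidal. For part (b), however, you take a genuinely different and noticeably heavier route. The paper simply multiplies by $\theta$: since $\theta$ is a nonzero holomorphic form of weight $1/2$, the map $f \mapsto f\theta$ injects $S_{3/2}(\Gamma_1(4))$ into $S_2(\Gamma_1(4))$, which vanishes because $X_1(4)$ has genus zero; complex conjugation then handles the antiholomorphic statement exactly as you do. Your route through the Shimura correspondence reaches the same conclusion but imports nontrivial theory: one must know the level of the lift ($2N$ for input level $4N$), the fact that the lift of a form orthogonal to unary theta series is cuspidal in weight $3/2$, and --- the point you pass over most quickly --- that the lift is injective on that orthogonal complement, which requires decomposing into Hecke eigenforms and knowing nonvanishing of the lift on each. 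None of this is false, and your fallback via the Cohen--Oesterl\'e dimension formula is also perfectly sound (and is the tool the paper itself uses for the analogous weight $3/2$ computation in the odd Schwartz case), but the multiplication-by-$\theta$ trick buys the same vanishing with only the genus of $X_1(4)$ as input. Your attention to transporting the metaplectic multiplier to the classical theta multiplier under conjugation is a point the paper leaves implicit, and is worth having spelled out.
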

\proof
For (a), the group $\Gamma$ is conjugate to $\Gamma_1(4)$, for which the space of modular forms of weight $1/2$ is spanned by the theta series  $\theta_{1/2}(z) = \sum_{n \in \BZ}e^{2\pi i z  n^2}$ \cite{SS77}. 
 
For (b), we use the fact that multiplication by $\theta$ injects the space of weight $3/2$ forms into the space of weight $2$ forms.  The space of weight $2$ cusp forms for $\Gamma_1(4)$ is, however, trivial;
indeed, the compactified modular curve $X_1(4)$ has genus zero. The final assertion follows by complex conjugation. 
\qed

\subsection{Variants: odd Schwartz functions, higher dimensions, Heisenberg uniqueness.}\label{triangle groups} 
We now show how the same ideas give several other interpolation theorems
 without changing the group $\Gamma = \langle e,f\rangle$;
it may also be of interest to consider $(\infty, p, q)$-triangle groups.

\subsubsection{Odd Schwartz functions} The discussion of Section \ref{Weil} on the even Weil representation $\mS$ carries verbatim to its odd counterpart $\mT$, whose $\gK$-module of $K$-finite vectors is spanned by the translates of the lowest weight vector $v_{3/2} = xe^{-\pi x^2}$.
As above, we compute  using Theorem \ref{mainthm}, to get \[ H^0(\Gamma, \mT^*) = \BC, \quad H^1(\Gamma, \mT^*) = 0. \] 
Indeed, the zeroth cohomology $H^0(\Gamma, \mT^*)$ is identified with the space of modular forms of weight $3/2$, a one-dimensional space spanned by~$\theta^3$, as can be deduced from \cite{CO76dimensions}. As for $H^1(\Gamma, \mT^*)$, its dimension is equal to the multiplicity of $\mT^{cl}$ in the space of cusp forms on $\Gamma$. The representation $\mT^{cl}$ has highest weight $-1/2$, and the vanishing of   $H^1$ results from the absence of holomorphic cusp forms of weight $1/2$ on $\Gamma$ as in \ref{lemma spaces of mf}. 
We then deduce an interpolation theorem as in \S \ref{introduction}, noting that in addition to the $\delta_{n}$ the distributions $\phi \mapsto \phi'(0)$ (resp. $\phi \mapsto \hat{\phi}'(0)$) are also $e$- (resp. $f$-)invariant.
Arguing as in \S \ref{outline} recovers a non-explicit version of the interpolation theorem of Radchenko-Viazovska for odd Schwartz functions, see \cite[Thm. 7]{RV19}.

\subsubsection{Radial Schwartz functions on $\mathbf{R}^d$} We may, similarly, consider instead the representation $\mathcal{S}_d$ of $\SL_2(\R)$ on radial Schwartz functions on $\mathbf{R}^d$. 
This is, for reasons very similar to that enunciated in \S  \ref{Weil}, 
a lowest weight representation of the double cover of $\SL_2(\R)$, but now of lowest weight $d/2$ generated by $e^{-\pi(x_1^2+...+x_d^2)}$.
We claim that in all cases the corresponding $H^1$ continues to vanish.
Indeed, for $d$ even the complementary representation $W^{cl}$ is finite-dimensional
and does not occur in cusp forms;  for $d$ odd,
occurrences of $W^{cl}$ in cusp forms correspond just as before to 
 holomorphic cusp forms of weight $\frac{4-d}{2}$ for $\Gamma(2)$, 
 and these 
do not exist for any odd $d$. Therefore we find
that the values of $f$  and $\hat{f}$ at radii $\sqrt{n}$
determine $f$,  subject only to a finite-dimensional space of constraints (the dimension
is equal to that of weight $d/2$ holomorphic forms for $\Gamma(2)$).

\subsubsection{Heisenberg uniqueness} \label{section variant}

A result of Hedenmalm and Montes-Rodríguez \cite{HM}
asserts that the map  
\begin{equation} \label{HMmap} L^1(\mathbf{R}) \rightarrow \mbox{sequences}, \quad   h \mapsto \int h(t) e^{\pi i \alpha n t}dt,  \int h(t) e^{\pi i \beta n/t}dt \end{equation}
 is injective if and only if $\alpha \beta \leq 1$.  In their terminology, this yields an example of a ``Heisenberg uniqueness pair.'' 
We thank the referee for bringing this result to our attention. 
Using our techniques, we show
that an abstract interpolation formula -- admittedly, on a eccentric function space -- holds at the transition point $\alpha \beta=1$.

 \begin{theorem}\label{P1 interpolation} Let
 $\mathcal{H}$ be the space of smooth functions on $\R$ with the property that $x^{-2} h(x^{-1})$ extends 
 from $\R-\{0\}$ to a smooth function on $\R$.
 Fix $\alpha, \beta$ with $\alpha \beta=1$ and for $n \in \BZ$ 
write $a_n =  \int h(t) e^{\pi i  \alpha n t}dt$ and $b_n = \int h(t) e^{ \pi i\beta n/t} dt$.
Then the map 
$$ h \mapsto ((a_n), (b_n), h(0), \lim_{x \rightarrow \infty} x^2 h(x))
$$ 
	  defines a linear isomorphism of     $\mathcal{H}$
	   	with  a  codimension $3$ subspace $S$ of\footnote{Note that integration by parts shows that $a_n, b_n$  indeed belong to the space $\fs$ of sequences with rapid decay, introduced in \S \ref{introduction}.} 
  $\mathfrak{s}^2 \oplus \C^2$.
  	\end{theorem}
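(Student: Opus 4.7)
The plan is to follow the strategy of \S\ref{ss End of the proof}, adapted to the representation $\mathcal{H}$. First, I would identify $\mathcal{H}$ with the space of smooth $1$-forms on $\mathbf{P}^1(\BR)$ via $h \mapsto h(x)\,dx$: in the coordinate $y = 1/x$ near $\infty$, this form equals $-\tilde h(y)\,dy$ with $\tilde h(y) = y^{-2}h(1/y)$, so the smoothness condition defining $\mathcal{H}$ matches smoothness as a $1$-form on $\mathbf{P}^1(\BR)$. As a representation of $\SL_2(\BR)$, $\mathcal{H}$ is then the smooth globalization of the principal series $H^{\sigma,\xi}$ from \S\ref{BOsec} with $\sigma$ trivial and $\xi = 1$: one reads off the character of $B$ from its action on the cotangent line at $eB$, which is $\mathrm{diag}(a, a^{-1}) \mapsto a^2 = a^{\xi+1}$. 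After conjugation by $\mathrm{diag}(\sqrt{\alpha}, 1/\sqrt{\alpha})$, the elements $e', f'$ that respectively fix $\{a_n\}$ and $\{b_n\}$ coincide with the standard generators $e, f$ of the group $\Gamma$ from \eqref{efdef}.

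Next, following \S\ref{outline}, I would identify $(\mathcal{H}^*)^{e'}$ and $(\mathcal{H}^*)^{f'}$ explicitly. Since $e'$ acts as the translation $t \mapsto t + 2/\alpha$ on the affine chart $\BR$ and fixes $\infty$, any $e'$-invariant distribution in $\mathcal{H}^*$ restricts to a distribution on $\BR$ pulled back from the quotient circle $\BR/(2/\alpha)\BZ$, expanding in Fourier series $\sum_n c_n e^{\pi i \alpha n t}$ with $(c_n) \in \fs^*$; complementarily, the only $e'$-fixed distribution supported at $\infty$ is the evaluation $h \mapsto \lim_{x \to \infty} x^2 h(x)$, since the infinitesimal generator of $e'$ near $\infty$ equals $-y^2\partial_y$ in the coordinate $y = 1/x$, which acts as a strict shift on jets of $1$-forms at $\infty$. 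Symmetrically, $(\mathcal{H}^*)^{f'} \cong \fs^* \oplus \BC$ is spanned by $\{b_n\}$ and $h \mapsto h(0)$. The Mayer-Vietoris sequence \eqref{MWseq} for the free group $\Gamma = \langle e, f\rangle$ then reduces the theorem to the assertions
\begin{equation*} \dim H^0(\Gamma, \mathcal{H}^*) = 3 \quad \text{and} \quad \dim H^1(\Gamma, \mathcal{H}^*) = 0. \end{equation*}

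To compute these groups, I would invoke case (c) of the classification after \eqref{Raising lowering H}: with $\sigma$ trivial and $\xi = 1$, the $\gK$-module $\mathcal{H}_K$ fits in an exact sequence $0 \to V_+ \oplus V_- \to \mathcal{H}_K \to F \to 0$, where $V_\pm$ is the discrete series module of lowest/highest weight $\pm 2$ and $F$ is the trivial representation. Dualizing and passing to distribution globalizations (noting that the contragredient of $V_\pm$ is $V_\mp$),
\begin{equation*} 0 \to F \to \mathcal{H}^* \to (V_-)_{-\infty} \oplus (V_+)_{-\infty} \to 0. \end{equation*}
Theorem~\ref{mainthm} gives $H^1(\Gamma, (V_\mp)_{-\infty}) = 0$, since $V_\pm^{cl}$ is the trivial representation and does not appear among cusp forms. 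Lemma~\ref{FrobRep} gives $H^0(\Gamma, (V_\mp)_{-\infty}) = \dim M_2(\Gamma) = 2$, using that $\Gamma$ is conjugate to $\Gamma_1(4)$, which has genus $0$, three cusps, and $S_2 = 0$. Combined with $H^\bullet(\Gamma, F) = (\BC, \BC^2, 0)$ (since $\Gamma$ is free on two generators), the long exact sequence reads
\begin{equation*} 0 \to \BC \to H^0(\Gamma, \mathcal{H}^*) \to \BC^4 \stackrel{\partial}{\to} \BC^2 \to H^1(\Gamma, \mathcal{H}^*) \to 0, \end{equation*}
immediately yielding $\dim H^0 - \dim H^1 = 3$.

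The main obstacle is showing that $\partial$ is surjective. Unwinding the definitions, $\partial$ sends a holomorphic (resp.\ antiholomorphic) weight-$2$ modular form $\omega$ for $\Gamma$ to its Eichler--Shimura period cocycle $\gamma \mapsto \int_{z_0}^{\gamma z_0} \omega$ in $H^1(\Gamma, \BC)$. Since $S_2(\Gamma) = 0$, classical Eichler--Shimura theory identifies $H^1(\Gamma, \BC) = \BC^2$ entirely with its Eisenstein part, onto which the weight-$2$ Eisenstein series for $\Gamma$ surject via $\partial$. Hence $\partial$ is surjective, $H^1(\Gamma, \mathcal{H}^*) = 0$, and $H^0(\Gamma, \mathcal{H}^*) = 3$, completing the proof.
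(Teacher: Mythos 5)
Your proposal is correct and follows essentially the same route as the paper's proof: the identification of $\mathcal{H}$ with $1$-forms on $\mathbf{P}^1_{\R}$ as a reducible principal series with composition series $D_2^+\oplus D_2^- \to W \to \C$, the long exact sequence combined with Theorem \ref{mainthm} and Frobenius reciprocity, the surjectivity of the connecting map onto $H^1(\Gamma,\C)$ via weight-$2$ Eisenstein periods, and the Mayer--Vietoris identification of the $e$- and $f$-invariants. Your Eichler--Shimura phrasing of the connecting-map argument is just a restatement of the paper's observation that the obstruction to extending an embedding of $D_2^+\oplus D_2^-$ into automorphic forms is the period class of $f\,dz+g\,\overline{dz}$.
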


In this form, this neither implies nor is implied by the results
of \cite{HM}, but it would be interesting to see if our methods can give results
closer to theirs, e.g. by considering different completions of the
underlying representation.

 We obtain 
 Theorem \ref{P1 interpolation} in a similar   
 way to Theorem \ref{interpolation} -- namely,  by applying Theorem \ref{mainthm} for the same $\Gamma$, but with a different coefficient system. 
Note that we can and will assume that $\alpha=\beta=1$ by rescaling. 
We now consider the space   $W$  of smooth $1$-forms on   $\mathbf{P}^1_{\R} = \R \cup \infty$,
which we may think of equivalently as   smooth functions  $\Phi(x,y)$ on $\mathbf{R}^2-\{0\}$ 
satisfying
$$\Phi(\lambda x, \lambda y) = \lambda^{-2} \Phi(x, y).$$
The $1$-form  on $\mathbf{P}^1_{\R}$ associated to $\Phi$ is characterized by the fact that, when pulled back to $\R^2-\{0\}$,
it gives the $1$-form   $\Phi(x,y) (x dy -y dx)$.
 Write
\begin{equation} \label{hPhi}  a_n(\Phi) = \int \Phi(x, 1) e^{\pi i n x} dx, \quad  b_n(\Phi)  = \int \Phi(1, y) e^{\pi i n y}dy .\end{equation}

Write $h(x) = \Phi(x, 1)$. We note that  that $x^{-2} h(1/x) = \Phi(1, x)$, and so extends over~$0$. 
The map  $ \Phi \mapsto h(x) = \Phi(x, 1)$
thus identifies $W$ with the space $\mathcal{H}$
described in the theorem.
We are reduced
then to proving: 
 
\begin{quote}
{\em Claim:} the rule
\begin{equation} \label{Phi interpolation} \Phi \mapsto (a_n, b_n,\Phi(0,1), \Phi(1,0))\end{equation}
defines an isomorphism  of  $W$ with a codimension $3$ subspace of $\mathfrak{s}^2 \oplus \mathbf{C}^2$.
\end{quote}

\noindent \emph{Proof of Claim:}
We apply Theorem \ref{mainthm} to the $\gK$-module $W_K$;
the distribution globalization ``$W^*_{-\infty}$''  that appears in Theorem \ref{mainthm}
is simply the topological dual $W^*$ to $W$. 

To analyze  
  the $e$-invariants on $W^*$, take an arbitrary $e$-invariant distribution $\mathcal{D}$ on $W$.
  The identification $\Phi \mapsto h$ between the space of $-2$-homogeneous $\Phi$
  and $h \in \mH$ 
   contains $C^{\infty}_c(\R)$ in its image;
  thus, we can consider  $\mathcal{D}$ as a distribution on  the real line, i.e.,
  given any $h \in C^{\infty}_c(\R)$, we form the corresponding $\Phi$ and evaluate $\mathcal{D}$ on it. 
The result is a periodic distribution under $x \mapsto x+2$
which must be in the closed subspace spanned by the $a_n$ for $n \in \Z$ -- write this distribution $\sum c(n)a_n$. 
  Then the difference $\mathcal{D} - \sum c(n) a_n$ vanishes on $C^{\infty}_c(\R)$,
and is therefore a linear combination of the Taylor coefficients of $\Phi(1,y)$ at $y=0$; the only 
such distribution that is invariant under $e$ is $\Phi \mapsto \Phi(1,0)$.  It follows that $(W^*)^e$ is spanned topologically
 by the $a_n$ and evaluation at $(1,0)$. Similarly, $(W^*)^f$
 is spanned topologically by the $b_n$ and evaluation at $(0,1)$.

We will now  compute the cohomology of $\Gamma$ on $W^*$. 

 The space $W$ is identified with a reducible principal series of $SL_2(\BR)$
 which is an    extension $  D_2^+ \oplus D_2^- \to W \to \BC $, where $D_2^{\pm}$
 are the holomorphic and antiholomorphic discrete series of weight $2$;
 the map $W \rightarrow \BC$ is the integration over $\mathbf{P}^1_{\R}$. 
   Now Theorem \ref{mainthm} implies that $H^1(\Gamma,(D_2^{\pm})^*)$ vanishes, 
   whereas $H^0(\Gamma, (D_2^{\pm})^*)$ has dimension $2$ in both the $+$ and $-$ case. 
There is therefore an exact sequence 
   $$0 \rightarrow \BC \rightarrow H^0(\Gamma, W^*) \rightarrow \C^4 \rightarrow H^1(\Gamma, \BC) \rightarrow H^1(\Gamma, W^*) \rightarrow 0.$$
      The map $\C^4 \rightarrow H^1(\Gamma, \BC)$ is  surjective
      with two-dimensional kernel,
      for it amounts to the the map from the $4$-dimensional space of (holomorphic and antiholomorphic) Eisenstein series 
      for $\Gamma$ to the two-dimensional cohomology.
      \footnote{ Indeed, this map records
      the obstruction to extending an embedding of $D_2^+ \oplus D_2^-$ into the space
      of automorphic forms, to the larger space $W$. An embedding of $D_2^+ \oplus D_2^-$
      into the space of automorphic forms
      corresponds to a pair $(f, g)$ of a holomorphic and antiholomorphic $1$-form,
      and it extends to $W$ when $f dz +g\overline{dz}$ is an exact differential.} This proves
   that    $H^1(\Gamma, W^*)$ vanishes, whereas $H^0(\Gamma, W^*)$ is $3$-dimensional.  

 We now apply the Mayer-Vietoris sequence \eqref{MWseq}.
 In our current context, it implies that
$(W^*)^e$ and $(W^*)^f$ span all of $W^*$, and their intersection is precisely $3$-dimensional.
This concludes the proof of the claim. 
\qed

 It may be of interest to describe  the three linear constraints that define
 this codimension $3$ subspace. 
 We follow the notations above. 
 The invariants of $\Gamma$ on $W^*$
 have, as basis $A, I, J$
where
$$A(\Phi) = \int_{\mathbf{P}^1} \Phi$$
    $$I(\Phi) = \sum_{(m,n) \neq (0,0)} \Phi(m, n)  - 2 \sum_{2|n} \Phi(m,n)$$
  $$J(\Phi) =\sum_{(m,n) \neq (0,0)} \Phi(m, n)  - 2 \sum_{2|m} \Phi(m,n)$$
where in both cases the sum is conditionally convergent (e.g., one can sum
over large discs of increasing radii). 
Then 
$A$ corresponds to the relation $a_0=b_0$, 
whereas both $I$ and $J$ 
give rise to a relation by   expanding the stated intertwiner in two different ways. 
For example, we compute $I(\Phi)$ in two ways, firstly by summing first over $n$:
 \begin{eqnarray*}   \sum_{m \neq 0}  m^{-2}\sum_{n} (  \Phi(1, \frac{n}{m})  -2  \Phi(1, \frac{2n}{m}))+ 2 ( \sum n^{-2}  -2 \sum (2n)^{-2}) \Phi(0,1)
\\\stackrel{\textrm{P.S.}}{=}  
 -\sum_{m \neq 0, t \in \Z} |m|^{-1} b_{m (2t+1)} + \frac{\pi^2}{6} \Phi(0,1)
\end{eqnarray*}
where P.S. stands for Poisson summation, 
and secondly by summing  first over $m$:
\begin{eqnarray*}   \sum_{n \neq 0} \left(  n^{-2}    \sum_{m} \Phi(\frac{m}{n}, 1)   - \sum_m 2  (2n)^{-2} \Phi(\frac{m}{2n},1) \right) 2 ( \sum m^{-2}  -  2 \sum (m)^{-2}) \Phi(1,0)
\\ \stackrel{\textrm{P.S.}}{=}   \sum |n|^{-1}  a_{n(4t+2)}
 -  \frac{\pi^2}{3} \Phi(1,0). 
\end{eqnarray*}
 Thus we find that the image of $W$ is cut out by the three relations $a_0=b_0$, 
 $$
  \frac{\pi^2}{3} \Phi(1,0)+ \frac{\pi^2}{6} \Phi(0,1) =
 \sum_{m \neq 0, t \in \Z} |m|^{-1} b_{m (2t+1)} +  
  \sum_{n \neq 0, t \in \Z} |n|^{-1}  a_{n(4t+2)},
 $$
and dually
 $$
  \frac{\pi^2}{6} \Phi(1,0)+ \frac{\pi^2}{3} \Phi(0,1) =
 \sum_{m \neq 0, t \in \Z} |m|^{-1}a_{m (2t+1)} +  
  \sum_{n \neq 0, t \in \Z} |n|^{-1}  b_{n(4t+2)}.
 $$

 \bibliography{interpolation}{}
\bibliographystyle{alpha}

\end{document}